\newtheorem{Theorem}{Theorem}
\newtheorem{Proposition}[Theorem]{Proposition}
\newtheorem{Corollary}[Theorem]{Corollary}
\newtheorem{Example}[Theorem]{Example}
\title{Partial abelianization of free product of algebras.}
\author[1]{A.S. Kocherova\thanks {akocherova@ya.ru}}
\author[2, 3]{I.Yu. Zhdanovskiy\thanks {ijdanov@mail.ru}}
\affil[1]{Moscow Institute of Physics and Technology, Russian Federation}
\affil[2]{Moscow Institute of Physics and Technology, Russian Federation, Laboratory AGHA}
\affil[3]{HSE University, Russian Federation}
\begin{document}

\maketitle

\begin{abstract}
In this article we consider partial abelianization of  associative algebra with respect to a subalgebra. This notion is a generalization of usual abelianization of associative algebra and has an application in Quantum Mechanics and Quantum Information Theory. Using combinatorial methods, representation theory and algebraic geometry we study partial abelianization of free product of algebras in our work. 
\end{abstract}

\section{Introduction}

In our article we study some version of abelianization and its applications.
Let $k$ be algebraically closed field of characteristic zero and all algebras are presumed unital and associative.

One of the ideas of Noncommutative Geometry is to generalize duality between affine varieties and ring of algebraic functions onto the case of noncommutative rings. 
Naive idea is to consider abelianization of noncommutative algebra.  Consider noncommutative $k$-algebra $R$. Abelianization of $R$ is a commutative algebra $R^{ab}$ which is a quotient of $R$ by two-sided ideal generated by commutator space $[R,R]$. There are several generalizations of the abelianizations (see for example \cite{Pro}, \cite{KonRos}, \cite{Kap}, \cite{LB}).
We will study so called partial abeianization. Namely, consider pair $R_1 \subset R$, where $R_1$ is subalgebra of $R$. Partial abelianization $(R_1,R)^{ab}$ is a quotient of $R$ by two-sided ideal generated by $[R_1,R_1]$. 
We study partial abelianization of free product $A*B$ of finite-dimensional algebras $A$ and $B$ in our work.

There is a connection of a partial abelianizations with Quantum Mechanics.
Assume that we have two quantum finite-dimensional systems of observables corresponding to algebras $A$ and $B$. Also, assume that there are $m$ simultaneously measured observables of the following type $r_i = a_i + b_i, i = 1,...,m$. It is natural to ask the following question: is there decomposition of the quantum system generated by $A$ and $B$ into more simple parts? Recall that observables are simultaneously measured iff they commutes. Thus, we can reduce our question to the studying of irreducible representations of partial abelianization $(R, A*B)^{ab}$, where $R$ is a subalgebra of $A*B$ generated by elements $r_i, i = 1,...,m$. Our main studying is devoted to the cases $A = k^{\oplus l}$, $B = k^{\oplus 2}$ and $A = k^{\oplus 3}$, $B = k^{\oplus 3}$. 
In the first case algebras $A$ and $B$ have interpretations as algebras generated by observables in quantum systems of particles with spines $(l-1)/2$ and $1/2$ respectively. In the second case $A$ and $B$ correspond to observables in different quantum systems of particles with spine $1$.  

Our work was motivated by the following usage of the partial abelianization in Quantum Information Theory. For this purpose, recall the notion of mutually unbiased bases (briefly, MUBs). This notion plays important role in Quantum Information Theory.
Namely, two orthonormal bases $e_i, i = 1,...,n$ and $f_j, j = 1,...,n$ in n-dimensional vector space $V$ with Hermitian metric $(,)$ is {\it mutually unbiased} if $|(e_i,f_j)|^2 = \frac1n$ for any $i,j = 1,...,n$. Remind the history of the studying of MUBs. 
Firstly, this notion was established by greatest physicist Schwinger \cite{Sch}. 
Also, mathematical version of this notion is a so-called orthogonal decomposition (briefly, OD) of simple Lie algebras $sl(n)$. This object was intensively studied by Thompson, Kostrikin, Ufnarovskii, Pham Huu Tiep and other investigators (\cite{Tho}, \cite{KKU1}, \cite {KP} etc)

Mathematical part of MUBs and ODs lie in the intersection of algebraic geometry, representation theory, combinatorics and mathematical physics. There are many articles about connections of these objects with other branches of math (cf. for example \cite{BLDH}, \cite{BT} \cite{Rus}, \cite{KKU1}, \cite{DEBZ}, \cite{BZ1} and etc.).

Classification of MUBs (and ODs) is very hard problem and completely solved if only $n \le 5$ (\cite{KKU}, \cite{Haa}). It was proved that there is 4-dimensional family of MUBs for $n = 6$ \cite{BZ2}. In the case $n = 7$ one-dimensional family of MUBs was constructed by Petrescu (\cite{Pet}). Assume that $\{e_i\}^7_{i = 1}$ and $\{f_j\}^7_{j = 1}$ are MUBs from the family of Petrescu. Using these bases, Nicoara (\cite{Nic}) constructs two pair of orthogonal projectors $P_1, P_2$ and $Q_1, Q_2$ of rank $2$ such that 
\begin{equation}
\label{nic}
[P_1,Q_1] - [P_2,Q_2] = [P_1 + Q_2, P_2 + Q_1] = 0. 
\end{equation}
Denote by ${\cal N}$ the algebra generated by two pairs orthogonal projectors $P_1,P_2$ and $Q_1,Q_2$ satisfying to relation (\ref{nic}).
Representations of ${\cal N}$ were studied in the article (\cite{KZ1}). This exploration permits to construct Petrescu's family by means of representation theory and algebraic geometry (\cite{KZ2}). One can see that algebra ${\cal N}$ is an example of a partial abelianization of $k^{\oplus 3} * k^{\oplus 3}$.

The main object of exploration is a family of algebras $S_x$ labeled by points of ${\mathbb  P}^3$.
Orthogonal projectors $p_1,p_2$ and $q_1,q_2$ are generators of $S_x$. These projectors satisfy to the relation:
\begin{equation}
\sum^2_{i,j=1} x_{ij}[p_i,q_j] = 0, 
\end{equation}
where $(x_{11}:x_{12}:x_{21}:x_{22}) \in {\mathbb P}^3$.
It is easy that algebras $S_x, x \in {\mathbb P}^3$ are generalization of algebra ${\cal N}$ and one can show that algebras $S_x$ are partial abelianization of free product $k^{\oplus 3} * k^{\oplus 3}$.
Using combinatorial methods, representation theory and algebraic geometry, we get the main theorem of this article:
\begin{Theorem}
For generic $x \in {\mathbb P}^3$ algebra $S_x$ has dimension 18 and isomorphic to direct sum of 9 $k$'s and matrix algebra of order $3$.
\end{Theorem}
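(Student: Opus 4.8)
The plan is to understand the structure of the algebra $S_x$ by classifying its finite-dimensional irreducible representations and then assembling them into the full algebra for generic $x$. Since $S_x$ is generated by two pairs of orthogonal projectors $p_1,p_2$ and $q_1,q_2$ subject to a single linear relation among the commutators $[p_i,q_j]$, the natural first step is to pass to representation theory. In any irreducible representation, each projector becomes a genuine projection on a finite-dimensional space $V$. The key structural fact is that a pair of projections, say $p_1,p_2$ (or $q_1,q_2$), admits a well-known normal form: by the theory of two subspaces (equivalently, two projections), $V$ decomposes into one- and two-dimensional indecomposable blocks on which $p_1,p_2$ act by standard matrices, with the two-dimensional blocks parametrized by an angle or eigenvalue. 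First I would invoke this classification for each of the two pairs separately.

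Next I would combine the two decompositions. Because we have two independent pairs of projectors, an irreducible representation is governed by the relative position of the $p$-configuration and the $q$-configuration, and the single relation $\sum_{i,j} x_{ij}[p_i,q_j]=0$ cuts out which relative positions are allowed for a given $x$. My plan is to show that for generic $x$ the irreducible representations come in only a few dimensions — namely dimension $1$ and dimension $3$. The one-dimensional representations arise when all four projectors are scalars ($0$ or $1$); counting the choices of which projectors are $0$ or $1$ subject to the orthogonality constraints $p_1p_2=p_2p_1=0$ and $q_1q_2=q_2q_1=0$ within each pair should yield exactly nine admissible one-dimensional representations, accounting for the nine copies of $k$. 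The three-dimensional irreducible representation, accounting for the matrix algebra $M_3(k)$ of dimension $9$, should be the one genuinely noncommutative piece, where the projectors are honest rank-one or rank-two projections in general position and the relation with parameter $x$ becomes a nondegenerate condition.

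Having identified the list of irreducibles, I would then argue that for generic $x$ the algebra $S_x$ is semisimple and equals the product of the matrix algebras attached to these irreducibles, namely $k^{\oplus 9}\oplus M_3(k)$, whose dimension is $9\cdot 1 + 9 = 18$. Semisimplicity can be established either by exhibiting $S_x$ as a finite-dimensional algebra with trivial Jacobson radical (e.g.\ via a trace form that is nondegenerate for generic $x$), or by a deformation/dimension-count argument showing that the sum of squares of the dimensions of the irreducibles equals $\dim S_x$. The genericity is essential: for special values of $x$ extra relations among the commutators can force additional coincidences (as happens in the degenerate algebra $\mathcal N$), merging or splitting representations, so one must verify that the nondegeneracy conditions carve out a Zariski-open dense subset of $\mathbb{P}^3$.

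The hard part will be the middle step: proving that for generic $x$ there are \emph{no other} irreducible representations beyond the nine one-dimensional ones and the single three-dimensional one, and in particular ruling out higher-dimensional or additional two-dimensional irreducibles. This requires a careful analysis of how the linear relation interacts with the two-projection normal forms — essentially solving, as a function of the parameter $x$, when a simultaneous pair of two-projection configurations can satisfy $\sum_{i,j} x_{ij}[p_i,q_j]=0$ irreducibly. I expect this to be where the combinatorial enumeration of cases and the algebraic-geometry genericity arguments do the real work, and where one must show that apparently possible higher-dimensional families collapse or fail to exist for generic $x$.
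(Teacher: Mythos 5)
Your overall target structure (nine one-dimensional irreducibles plus one three-dimensional one, then a sum-of-squares count giving $9+9=18$) matches the paper's conclusion, but the proposal is missing the one ingredient that makes the paper's argument close, and the route you propose for the ``hard part'' does not have a viable starting point. First, the missing ingredient: you never establish that $S_x$ is finite-dimensional, let alone an upper bound on its dimension. This is not a formality --- the paper notes that $S_x$ is infinite-dimensional for $x=(1:0:0:-1)$, so finiteness is itself a nontrivial generic statement. The paper's central technical step is a direct combinatorial bound: working over ${\mathbb F}=k(y_1,y_2,y_3)$, it filters ${\cal R}={\cal A}*{\cal B}$ by word length, computes (by machine) that the ideal generated by the relation $X$ already has dimension $\ge 42$ in filtration level $4$, deduces $\dim_{\mathbb F}F^4{\cal S}\le 19$, shows $F^5{\cal S}=F^4{\cal S}$ so that ${\cal S}$ is finite-dimensional of dimension $\le 19$, and then uses a $\Sigma_3$-symmetry (permuting $p_1,p_2,1-p_1-p_2$ and acting on the parameters) to force divisibility by $3$, hence $\dim_{\mathbb F}{\cal S}\le 18$. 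Once this bound is in hand, exhibiting the nine characters and a single $3$-dimensional irreducible already gives $9\cdot 1^2+3^2=18\le\dim$, so equality holds, the algebra is semisimple, and no other irreducibles can exist --- the classification problem you flag as the hard part is bypassed entirely. Without such an a priori bound, your plan to ``verify there are no other irreducibles'' has to be carried out head-on, and you offer no mechanism for it.

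Second, the classification tool you propose to invoke is misapplied. Within each pair the projectors are orthogonal ($p_1p_2=p_2p_1=0$), so $p_1,p_2$ commute and the Halmos two-subspaces normal form says nothing interesting about them; each triple $p_1,p_2,1-p_1-p_2$ simply decomposes $V$ into three summands. The actual configuration to classify is the relative position of \emph{two} such decompositions into three parts, i.e.\ the representation theory of $k^{\oplus 3}*k^{\oplus 3}\cong k[{\mathbb Z}_3*{\mathbb Z}_3]$, which the paper shows is a $MID_\infty$-algebra (it contains a free group algebra of finite index); there is no finite list of indecomposable blocks to enumerate. What does survive from your plan is the bound ${\rm mid}\le 3$ on irreducibles of the partial abelianization, which the paper obtains from Proposition \ref{fmod} and Proposition \ref{comm} (the subalgebra $R(V)$ generated by a generic $V$ has $A*B$ as a quotient of $R(V)^{\oplus 3}$), and the explicit construction of the $3$-dimensional irreducible, which the paper carries out by inducing a character of $R(V)$ and showing that the commutativity constraint $[t_1,t_2]=0$ cuts out three points in the intersection of three conics over a cubic extension ${\mathbb K}\supset{\mathbb F}$. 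You would need to supply both an analogue of the dimension bound and this explicit construction to turn your outline into a proof.
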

This result has the following interpretation in terms of Quantum Mechanics. Assume that we have two particles of spine $1$. Fix two observables $A,B$ in first and second quantum systems respectively. Assume that there are two linear combinations of $A^i, B^j$ which are simultaneously measured. In this case quantum system generated by two particles reduces to direct sum of several copies of one-dimensional and three-dimensional quantum systems.

Our article is organized as follows. Section \ref{secfir} is devoted to properties of partial abelianization. In this section we introduce the following "measure" of noncommutativity of algebra $R$: maximum of dimensions of irreducible representations. We denote it by ${\rm mid}(R)$.  
It is easy that ${\rm mid}(R)$ reflects difference between algebra $R$ and commutative algebras. We show that if $R_1$ sufficiently large subalgebra of $R$ then ${\rm mid}(R_1,R)^{ab} < \infty$. Using this result, we study partial abelianization of $k^{\oplus l}* k^{\oplus 2}$.
In the section \ref{secsec} we study partial abelianization of $k^{\oplus 3}* k^{\oplus 3}$ and prove the main theorem.

We are grateful to A.Bondal, G.Amosov, I.Karzhemanov and Yu.Orlov for helpful discussions.

Second author was partially supported by the HSE University Basic Research Program and the Russian Academic Excellence Project '5-100.
Second author was partially supported by grant RFBR - 18-01-00908

\section{Partial abelianization: definition, properties and examples.}
\label{secfir}

In this section we introduce the notion of partial abelianization of an algebra with respect to its subalgebra. Also, we study properties of the partial abelianization of free product and its irreducible representations. We study the case of $k^{\oplus l} * k^{\oplus 2}$ at the end of section.
\subsection{Partial abelianization and $MID_r$ - algebras.}

In this section we will make some remarks on the properties of abelianization and $mid$ - algebras. 

Let $R$ be a unital associative algebra over $k$. Recall that its abelianization $R^{ab}$ is a quotient of $R$ by two-sided ideal generated by the complete commutator subspace $[R,R]$. It is well-known that abelianization is a functor from category of associative algebras to category of commutative algebras.

Define partial(or relative) abelianization as follows. Consider a category ${\cal C}$ of the pairs of algebras $(R_1, R)$ with fixed morphism of algebras $f:R_1 \to R$. Fix two pairs $(R_1,R)$ and $(R'_1, R')$ and morphisms $f: R_1 \to R$ and $f':R'_1 \to R'$. Define a morphism $g$ in the category ${\cal C}$ as follows: $g: (R_1,R) \to (R'_1,R')$ is a pair of morphisms $g_1:R_1 \to R'_1$ and $g_2:R \to R'$ compatible with the morphisms $f$ and $f'$. Denote by ${\rm Assoc}_k$ the category of associative algebras over $k$. 
We will call the free product $(R_1,R)^{ab} = R*_{R_1} R^{ab}_1$ by {\it partial(or relative)} abelianization. Note the following trivial properties of the partial abelianization:
\begin{itemize}
\item{For any $R_1$ and morphism $f$ there is a surjective morphism: $R \to (R_1,R)^{ab}$.}
\item{If $f: R_1 \to R$ is surjective then $(R_1,R)^{ab} = R^{ab}$ (i.e. a usual abelianization).}
\item{There is a natural morphism of algebras: $R^{ab}_1 \to (R_1,R)^{ab}$. If $f$ is a monomorphism then $R^{ab}_1$ is a subalgebra of $(R_1,R)^{ab}$.}
\item{If $f(R_1)$ is a commutative subalgebra of $R$, then $(R_1,R)^{ab} = R$.} 
\item{Partial abelianization: $(R_1,R)^{ab}$ is a quotient of $R$ by the two-sided ideal generated by $[f(R_1),f(R_1)]$.}
\item{If $f$ is a monomorphism, then there is a surjective morphism $(R_1,R)^{ab} \to R^{ab}$.} 
\end{itemize}
It is easy to see that partial abelianization plays an intermediate role between $R$ and $R^{ab}$. 
Denote by ${\rm mid}(R)$ ( the maximal dimension of irreducible representation of a fixed algebra $R$. ${\rm mid}(R)$ "measures" noncommutativity of $R$. 
We will call algebra $R$ by $MID_r$ - algebra if ${\rm mid}R \le r$. If ${\rm mid}R = \infty$ then algebra $R$ is called $MID_{\infty}$ - algebra.
It is easy that if $R$ is commutative then ${\rm mid}R = 1$. If $R$ is a free algebra with more than one generator then ${\rm mid}(R) = \infty$. One can construct many examples of the algebra $R$ such that ${\rm mid}(R) = \infty$. It is natural to consider algebras $R$ satisfying the condition ${\rm mid}(R) \le r$ for some arbitrary $r$. 


Consider some condition on $R_1,R$ such that $(R_1,R)^{ab}$ is $MID_r$ - algebra for some $r$.
Firstly, recall the property of $MID_r$ - algebra.
Note the following useful property of $MID_r$ - algebras.
\begin{Proposition}
\begin{itemize}
\item{Consider an algebra $R$. If some quotient $R/I$ is not $MID_r$ - algebra for some $r$, then $R$ is not $MID_r$ - algebra.} 
\item{Assume $R_1 \subset R$ is a subalgebra and $R_1$ is a $MID_r$ - algebra. If $R/R_1$ is a  projective $R_1$ - module, then $R$ is $MID_k$ - algebra for some $k \ge r$ ($k$ may be $\infty$).} 
\end{itemize}
\end{Proposition}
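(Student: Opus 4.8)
The plan is to handle the two items separately: the first is a one-line pullback argument, while the second rests on the restriction/induction adjunction for the pair $R_1\subset R$.

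For the first item I would argue contrapositively. Suppose $R/I$ is not a $MID_r$-algebra, so there is an irreducible $R/I$-module $W$ with $\dim_k W>r$. Composing with the quotient map $\pi\colon R\to R/I$ makes $W$ into an $R$-module of the same dimension, and since $\pi$ is surjective the images of $R$ and of $R/I$ in ${\rm End}_k(W)$ coincide; hence $W$ stays irreducible over $R$. Thus ${\rm mid}(R)\ge\dim_k W>r$, so $R$ is not a $MID_r$-algebra. Equivalently, ${\rm mid}(R)\ge{\rm mid}(R/I)$ for every two-sided ideal $I$, which is exactly the claim.

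For the second item I would use Frobenius reciprocity. Projectivity of $R/R_1$ splits the sequence $0\to R_1\to R\to R/R_1\to 0$ of $R_1$-modules, so $R\cong R_1\oplus(R/R_1)$; in particular $R$ is a projective $R_1$-module, a direct summand of a free one. Let $N$ be the number of generators of $R$ as a right $R_1$-module, so $N<\infty$ precisely when $R/R_1$ is finitely generated and $N=\infty$ otherwise. Then $\dim_k(R\otimes_{R_1}W)\le N\dim_k W$ for every $R_1$-module $W$, since the $N$ generators span the tensor product over $R_1$. Given a finite-dimensional irreducible $R$-module $V$, choose a simple $R_1$-submodule $W\subseteq{\rm Res}\,V$; as $R_1$ is a $MID_r$-algebra, $\dim_k W\le r$. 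Under the adjunction ${\rm Hom}_R(R\otimes_{R_1}W,V)\cong{\rm Hom}_{R_1}(W,{\rm Res}\,V)$ the inclusion $W\hookrightarrow{\rm Res}\,V$ corresponds to a nonzero map ${\rm Ind}_{R_1}^R W=R\otimes_{R_1}W\to V$, whose image is the $R$-submodule generated by $W$ and hence all of $V$ by irreducibility. Therefore $\dim_k V\le N\dim_k W\le Nr$, so $R$ is a $MID_{Nr}$-algebra; since $N\ge1$ the exponent $k=Nr$ satisfies $k\ge r$, as stated.

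The main obstacle is the passage to possibly infinite-dimensional irreducibles, which is exactly why the statement must allow $k=\infty$. When $R/R_1$ is not finitely generated the bound degenerates ($N=\infty$), and even in the finitely generated case one must secure the simple $R_1$-submodule $W\subseteq{\rm Res}\,V$: this is automatic once $V$ is finite-dimensional, but for a general irreducible $V$ the restriction ${\rm Res}\,V$ need not contain a simple submodule (already $k[x]$ viewed over itself has none). This is where projectivity does real work: it guarantees that ${\rm Ind}_{R_1}^R=R\otimes_{R_1}-$ is exact and that $R$ is a summand of a free $R_1$-module, so induced modules from the (dimension-bounded) simples of $R_1$ have dimension controlled by $N$ and $r$, and every irreducible of $R$ is realized as a quotient of one of them. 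Pinning down $N$ in terms of the generators of $R/R_1$, and isolating the finite-dimensional regime where the simple submodule exists, are the two points I expect to require the most care.
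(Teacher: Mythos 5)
Your treatment of the first item matches the paper's (which simply calls it trivial): inflating an irreducible $R/I$-module along the quotient map preserves dimension and irreducibility, so $\mathrm{mid}(R)\ge\mathrm{mid}(R/I)$. No issue there.

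For the second item you have proved the wrong inequality. Read as an upper bound, the clause ``$R$ is a $MID_k$-algebra for some $k\ge r$ ($k$ may be $\infty$)'' is vacuous; the intended content --- made unambiguous by the paper's own proof and by the parallel with the first bullet --- is the \emph{lower} bound $\mathrm{mid}(R)\ge\mathrm{mid}(R_1)$: a subalgebra with $R/R_1$ projective cannot be more noncommutative than the ambient algebra. The paper argues via coinduction: for an irreducible $R_1$-module $V$, projectivity of $R/R_1$ gives $\mathrm{Ext}^1_{R_1}(R/R_1,V)=0$, so the evaluation map $\mathrm{Hom}_{R_1}(R,V)\to\mathrm{Hom}_{R_1}(R_1,V)=V$ is surjective; passing to a suitable irreducible $R$-subquotient $W'$ of the $R$-module $\mathrm{Hom}_{R_1}(R,V)$ that still surjects onto $V$ yields $\dim_k W'\ge\dim_k V$. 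Your Frobenius-reciprocity argument instead bounds $\mathrm{mid}(R)$ from \emph{above} by $N\cdot r$; that is essentially the content of Proposition \ref{comm} (which assumes a surjection $R_1^{\oplus s}\to R$), and on your reading the present proposition would be subsumed by it. Relatedly, your use of projectivity (splitting the sequence so that $R$ is a summand of a free $R_1$-module) is not where projectivity does work in the paper: it is needed precisely for the $\mathrm{Ext}$-vanishing that makes the evaluation map surjective. The difficulties you honestly flag (no simple $R_1$-submodule of an infinite-dimensional restriction, $N=\infty$) are genuine obstructions to your upper bound, but the primary gap is that the upper bound is not what this proposition asserts or what its proof establishes.
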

\begin{proof}
First statement is trivial. Fix irreducible $R_1$-module $V$.
Using the following exact sequence of $R_1$ - bimodules,
\begin{equation}
\label{exseqbc}
\xymatrix{
0 \ar[r] & R_1 \ar[r] & R\ar[r] & R/R_1 \ar[r] & 0,
}
\end{equation}
we obtain that natural morphism of $R_1$-modules: $p:{\rm Hom}_{R_1}(R,V) \to V$ is surjective. It is easy to see that ${\rm Hom}_{R_1}(R,V)$ is $R$-module. Let $W$ be $R$-submodule of ${\rm Hom}_{R_1}(R,V)$. Of course $W$ is $R_1$ - module. Since $V$ is irreducible $R_1$-module, we have two possibilities: $p(W) = 0$ or $p(W) = V$. Taking ${\rm Hom}_{R_1}(R,V) / W$ or $W$ enough times, we get irreducible $R$-module $W'$ with surjective morphism of $R_1$-modules: $W' \to V$. Thus, we conclude the proof.  
\end{proof}

Also, we have the following property of $MID_r$ - algebras:
\begin{Proposition}
\label{comm}
Consider an algebra $R$ with subalgebra $R_1$. Assume that there is a surjective morphism of $R_1$-modules: $R^{\oplus s}_1 \to R$ for arbitrary $s$. If $R_1$ is a $MID_r$ - algebra, then $R$ is a $MID_{sr}$ - algebra. 
\end{Proposition}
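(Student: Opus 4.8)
The plan is to bound the dimension of an arbitrary irreducible $R$-module directly, by restricting it to $R_1$, extracting an irreducible $R_1$-submodule whose dimension is controlled by the $MID_r$ hypothesis, and then spreading it out over $V$ using the $s$ generators of $R$ as a one-sided $R_1$-module.

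First I would fix an irreducible $R$-module $V$ and restrict the action to $R_1$. Choosing an irreducible $R_1$-submodule $U \subseteq V$ (which exists since $V$ is nonzero and, in the finite-dimensional setting in which ${\rm mid}$ is defined, of finite length over $R_1$), the assumption that $R_1$ is an $MID_r$-algebra gives $\dim_k U \le r$. Next I would exploit irreducibility over $R$: since $U \ne 0$ and $V$ is irreducible, $V = RU$. The surjection $R^{\oplus s}_1 \to R$ supplies elements $\rho_1,\dots,\rho_s \in R$ (the images of the standard basis) which generate $R$ as a one-sided $R_1$-module. Taking the convention that this is a surjection of \emph{right} $R_1$-modules while $V$ is a left module (the opposite case being symmetric), we obtain $R = \sum_{i=1}^s \rho_i R_1$, and substituting yields
\[
V = RU = \sum_{i=1}^s \rho_i R_1 U = \sum_{i=1}^s \rho_i U,
\]
where the last equality uses $R_1 U = U$ (as $1 \in R_1$).

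Since multiplication by $\rho_i$ is $k$-linear, $\dim_k(\rho_i U) \le \dim_k U \le r$, hence $\dim_k V \le \sum_{i=1}^s \dim_k(\rho_i U) \le sr$. As $V$ was an arbitrary irreducible $R$-module, this gives ${\rm mid}(R) \le sr$, i.e. $R$ is an $MID_{sr}$-algebra.

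The step requiring the most care is the compatibility of sides. The decomposition $V = \sum_i \rho_i U$ succeeds precisely because $R$ is generated by the $\rho_i$ as a \emph{right} $R_1$-module while $V$ is a left module, so that the factor $R_1$ is absorbed into $U$; generating $R$ as a left module would instead produce the $R_1$-submodules $R_1(\rho_i U)$, whose dimensions are not controlled by $r$, and the argument would break down. The only other subtlety is the existence of the irreducible submodule $U$, which is automatic once one restricts attention to finite-dimensional representations; in fact the computation above simultaneously shows that every irreducible $R$-module is forced to be finite-dimensional of dimension at most $sr$.
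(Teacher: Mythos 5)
Your proposal is correct and follows essentially the same route as the paper: restrict the irreducible $R$-module to $R_1$, extract an irreducible $R_1$-submodule of dimension at most $r$, and use the $s$ generators of $R$ over $R_1$ to bound the dimension by $sr$. The only cosmetic difference is that the paper packages the final step through the induced module $R\otimes_{R_1}M$ and tensor--hom adjunction, whereas you write $V=\sum_{i=1}^{s}\rho_i U$ directly; your explicit attention to the left/right module sides and to the existence of the irreducible $R_1$-submodule is a welcome addition, not a deviation.
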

\begin{proof}
Fix irreducible $R$ - module $V$. If we consider $V$ as $R_1$ - module, then we have $R_1$ - submodule $M \subseteq V$ such that ${\rm dim}_{\mathbb F}M \le r$. Using adjunction of functors, we get the following isomorphism: ${\rm Hom}_{R_1}(M,V) \cong {\rm Hom}_{R}(R \otimes_S M, V)$. Since ${\rm Hom}_{R_1}(M,V) \ne 0$, then there is a non-trivial morphism: $R \otimes_{R_1} M \to V$. Since $V$ is irreducible, we obtain that this morphism is surjective. Using surjectivity of morphism: $R^{\oplus s}_1 \to R$, we get that ${\rm dim}_k V \le sr$.
\end{proof}

\begin{Corollary}
Let $R_1$ be a subalgebra of $R$. Assume that $R^{ab}_1$ and $(R_1,R)^{ab}$ satisfy the condition of proposition \ref{comm} for some $s$. Then $(R_1,R)^{ab}$ is a $MID_s$ - algebra.
\end{Corollary}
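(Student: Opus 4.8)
The plan is to obtain the corollary as a direct application of Proposition \ref{comm} to the pair $(R^{ab}_1, (R_1,R)^{ab})$, with $R^{ab}_1$ playing the role of the subalgebra and $(R_1,R)^{ab}$ the role of the ambient algebra. First I would verify that this really is a pair of the type demanded by Proposition \ref{comm}. Since $R_1 \subset R$, the structural inclusion $f : R_1 \to R$ is a monomorphism, so by the listed property of partial abelianization the natural map $R^{ab}_1 \to (R_1,R)^{ab}$ is injective; hence $R^{ab}_1$ is genuinely a subalgebra of $(R_1,R)^{ab}$. This is the prerequisite that makes the module-theoretic hypothesis of Proposition \ref{comm} meaningful.

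Next I would pin down the value of the parameter $r$. The algebra $R^{ab}_1$ is commutative, being the abelianization of $R_1$, and over the algebraically closed field $k$ every irreducible representation of a commutative algebra is one-dimensional. Therefore ${\rm mid}(R^{ab}_1) = 1$, i.e. $R^{ab}_1$ is a $MID_1$ - algebra, so in our situation $r = 1$.

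Finally I would feed in the standing hypothesis that $R^{ab}_1$ and $(R_1,R)^{ab}$ satisfy the condition of Proposition \ref{comm} for the given $s$, namely that there is a surjective morphism of $R^{ab}_1$-modules $(R^{ab}_1)^{\oplus s} \to (R_1,R)^{ab}$. Applying Proposition \ref{comm} with its ``$R_1$'' taken to be $R^{ab}_1$, its ``$R$'' taken to be $(R_1,R)^{ab}$, and $r = 1$, we conclude that $(R_1,R)^{ab}$ is a $MID_{s \cdot 1} = MID_s$ - algebra, which is exactly the assertion.

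The argument is essentially a substitution into Proposition \ref{comm}, so I do not expect a serious obstacle. The only points requiring care are the two preliminary observations: that $R^{ab}_1$ embeds as a subalgebra of $(R_1,R)^{ab}$ (so that the hypothesis of Proposition \ref{comm} genuinely applies), and that its commutativity forces $r = 1$. Once these are in place the conclusion follows immediately.
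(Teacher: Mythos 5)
Your proof is correct and follows exactly the route the paper intends: the corollary is stated as an immediate consequence of Proposition \ref{comm} applied to the pair $R^{ab}_1 \subset (R_1,R)^{ab}$ with $r=1$ since $R^{ab}_1$ is commutative (the paper gives no separate proof). Your two preliminary checks (the embedding of $R^{ab}_1$ and ${\rm mid}(R^{ab}_1)=1$ over the algebraically closed field $k$) are exactly the points the paper leaves implicit.
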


Note the following property of  representations of partial abelianization $S = (R_1,R)^{ab} = R *_{R_1} R^{ab}_1$, where $R_1$ is a subalgebra of $R$.
Recall that there is a well-defined functor ${\bf Rep}: Assoc_k \to Sets$. This functor is defined by the formula: ${\bf Rep}(A) = {\rm Hom}_{Assoc_k}(A, M_n(k))$ for a fixed algebra $A$. If $A$ is finite-generated, then ${\bf Rep}_n(A)$ is an affine scheme over $k$. 
We have the following isomorphism: ${\bf Rep}_n(S) = X \times_{Y} Y'$, where $X = {\bf Rep}_n(R)$, $Y = {\bf Rep}_n(R_1)$ and $Y' = {\bf Rep}_n(R^{ab}_1)$.



\subsection{Free product and partial abelianization.}
In this section we will study partial abelianization of the free product of finite-dimensional algebras. 

Let $A$ and $B$ be the finite-dimensional algebras. Consider free product $A*B$. It is easy to see that representation theory of algebra $A*B$ is more complicated than that of $A$ and $B$. For example, if $A$ and $B$ are semisimple algebras (i.e. the global homological dimension of $A$ and $B$ is zero) then $A*B$ is quasi-free, i.e. of homological dimension 1. Also, it is easy to see that ${\rm mid}(A*B) \ge \max({\rm mid}(A), {\rm mid}(B))$. Consider following examples.

\begin{Example}
\label{exfir}
Consider an algebra $R = k^{\oplus 2} * k^{\oplus 2}$. First and second $k^{\oplus 2}$ are generated by idempotents $p,q$ respectively. It is well-known that $R$ is $MID_2$-algebra. Actually, this algebra has a non-trivial center $Z(R)$ generated by the element $z = -p-q+pq+qp$. It can be shown that $R$ is free $Z(R)$ - module of rank 4. Thus, ${\rm mid}(R) = 2$.
\end{Example}

\begin{Example}
\label{exsec}
Consider an algebra $R = k^{\oplus s}* k^{\oplus m} \cong k[{\mathbb Z}_s * {\mathbb Z}_l]$ where $s \ge l \ge 2$ and $s \ge 3$. Using Kurosh's Theorem, we get that free group $F$ of rank $(s-1)(l-1)$ is a subgroup of $G = {\mathbb Z}_s * {\mathbb Z}_l$ of finite index. 
It is easy to show that $k[G]$ as $k[F]$ - module has the following decomposition: $k[G] = \bigoplus_{x \in G/F} x k[F]$. Thus, $k[G]/k[F]$ is a free $k[F]$ - module. Note that ${\rm mid}(k[F]) = \infty$ for a group algebra of free group of rank more than 1. Applying proposition \ref{comm}, we get that algebra $R$ is $MID_{\infty}$-algebra.
\end{Example}

We will study partial abelianization of $A*B$ of special type.
Fix the decomposition of vector spaces: $A = k \cdot 1 \oplus \overline{A}$ and $B = k \cdot 1 \oplus \overline{B}$. Consider a subspace $V \subset \overline{A} \oplus \overline{B} \subset A*B$.
Denote by $R(V)$ the subalgebra generated by subspace $V$. Consider the pair $(R(V),A*B)$ and partial abelianization $(R(V),A*B)^{ab}$. For simplicity, denote they by $(V,A*B)$ and $(V,A*B)^{ab}$ respectively. Consider the variety ${\rm Gr}(r,\overline{A} \oplus \overline{B})$ of $r$ - dimensional subspaces of $\overline{A} \oplus \overline{B})$. 

Let us formulate the following useful proposition:
\begin{Proposition}
\label{fmod}
Let $A$,$B$ be finite-dimensional algebras. Let $V$ be a subspace of $\overline{A} \oplus \overline{B}$ such that the restrictions of natural projections $\pi_A|_V:V \subset \overline{A} \oplus \overline{B} \to \overline{A}$ and $\pi_B|_V: V \subset \overline{A} \oplus \overline{B} \to \overline{B}$ are surjective. Consider a subalgebra $R(V) \subset A*B$ generated by $V$. Put $m = \min({\rm dim}_kA, {\rm dim}_kB)$. Then we have the following surjective morphism of $R(V)$ - modules:
$R(V)^{\oplus m} \to A*B$. 
\end{Proposition}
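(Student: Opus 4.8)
The plan is to use the standard normal form for the free product together with an induction on word length. Recall that, as a $k$-vector space, $A*B$ has a basis consisting of $1$ together with all \emph{alternating words} $c_1 c_2 \cdots c_n$, where each $c_i$ lies in $\overline{A}$ or $\overline{B}$ and consecutive letters come from different factors. I will show that $A*B = R(V)\cdot A$, i.e. that $A*B$ is generated as a left $R(V)$-module by the subspace $A \subset A*B$. Since $\dim_k A$ vectors span $A$, this produces a surjection of left $R(V)$-modules $R(V)^{\oplus \dim_k A} \to A*B$, $(r_i)\mapsto \sum_i r_i a_i$, where $a_i$ runs over a basis of $A$. The whole configuration is symmetric in $A$ and $B$, so the same argument gives $A*B = R(V)\cdot B$ and a surjection $R(V)^{\oplus \dim_k B}\to A*B$; choosing the smaller factor yields the desired $R(V)^{\oplus m}\to A*B$ with $m=\min(\dim_k A,\dim_k B)$. (The analogous statement for right modules follows by peeling letters from the opposite end.)

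The heart of the argument is the inductive step, proving $c_1\cdots c_n\in R(V)\cdot A$ by induction on the length $n$. The base cases $n=0,1$ are immediate from surjectivity of $\pi_A|_V$ and $\pi_B|_V$. For the step, suppose first $c_1\in\overline{A}$. By surjectivity of $\pi_A|_V$ choose $v\in V\subseteq R(V)$ with $\pi_A(v)=c_1$, so $v=c_1+b$ with $b\in\overline{B}$. Then
\[
v\cdot(c_2\cdots c_n)=c_1c_2\cdots c_n + b\,c_2\cdots c_n,
\]
so $w = v\cdot(c_2\cdots c_n) - b\,c_2\cdots c_n$. The first summand lies in $R(V)\cdot A$ because $c_2\cdots c_n$ has length $n-1$ (induction) and $R(V)\cdot A$ is stable under left multiplication by $v\in R(V)$. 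For the correction term, note $c_2\in\overline{B}$ by alternation, so $bc_2\in B$ collapses as $bc_2=\mu\cdot 1 + b'$ with $b'\in\overline{B}$; hence $b\,c_2\cdots c_n = \mu\, c_3\cdots c_n + b'\,c_3\cdots c_n$, a word of length $n-2$ plus a genuine alternating word of length $n-1$. Both have length $<n$, so they lie in $R(V)\cdot A$ by induction. The case $c_1\in\overline{B}$ is identical with the roles of $A$ and $B$ (and of $\pi_A,\pi_B$) exchanged.

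The step I expect to be the main obstacle — and the reason the statement is not completely trivial — is precisely the handling of these correction terms. The subtlety is that $R(V)$ need not contain $\overline{A}$ or $\overline{B}$ individually; it contains only the mixed elements of $V$, which merely surject onto each factor. Thus one cannot simply strip off a single letter $c_1$ using $R(V)$: multiplying by $v=c_1+b$ unavoidably introduces the spurious tail $b\,c_2\cdots c_n$. The key observation that makes the induction close is that this tail begins with two letters from the \emph{same} factor $B$, which multiply inside $B$ and strictly lower the word length (producing terms of length $n-2$ and $n-1$). Verifying this length drop in every case, and checking that it is compatible with the filtration of $A*B$ by word length, is the only real content; once it is in place, the induction and the counting of generators are routine.
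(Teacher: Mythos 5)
Your proof is correct and takes essentially the same approach as the paper's: both show that $A*B$ is generated as a left $R(V)$-module by the smaller of the two factors, via induction on word length in the alternating normal form, using an element $v=a+b\in V$ to absorb a leading letter and observing that the correction term begins with two letters from the same factor and therefore collapses to strictly shorter words. The only cosmetic difference is that the paper first substitutes $a_i=v_i-b_i$ and pushes the $b$'s to the right using the identity (\ref{bivj}), whereas you peel the leftmost letter directly.
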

\begin{proof}
Assume that ${\rm dim}_k A \ge {\rm dim}_k B$. 
It is obvious that basis of $V$ has the following view:
$v_i = a_i + b_i, a_i \in \overline{A}, b_i \in \overline{B}$, where $a_i$ and $b_i$ generate $\overline{A}$ and $\overline{B}$ as linear spaces respectively. 
Since $a_i = v_i - b_i$, elements $v_i$ and $b_j$ are generators of algebra $A*B$.

We have the following identities $a_i \cdot a_j = \alpha_{ij0} \cdot 1 + \sum_s \alpha_{ijs}a_s$ and $b_i \cdot b_j = \beta_{ij0} \cdot 1 + \sum_t \beta_{ijt} b_t$, where $\alpha_{ijs}, \beta_{ijt} \in k$ are structural coefficients of algebras $A$ and $B$ respectively. Thus, we have the following identities:
\begin{equation}
(v_i - b_i)\cdot (v_j - b_j) = \alpha_{ij0} \cdot 1 + \sum_s \alpha_{ijs}(v_s - b_s) , i,j = 1,...,{\rm dim}A - 1,
\end{equation}
\begin{equation}
\label{bivj}
-b_i \cdot v_j = v_i \cdot b_j - v_i \cdot v_j + \sum_s \alpha_{ijs} v_s  - \sum_t \beta_{ijt} b_t + (\alpha_{ij0} - \beta_{ij0}) \cdot 1  - \sum_s \alpha_{ijs}b_s.
\end{equation}
In this formula, $ - v_i \cdot v_j  + \sum_s \alpha_{ijs} v_s \in R(V)$, $- \sum_t \beta_{ijt} b_t + (\alpha_{ij0} - \beta_{ij0}) \cdot 1  - \sum_s \alpha_{ijs}b_s \in B$. 
Any element of $A*B$ can be written as linear combination of monomials: $a_{i_1}b_{j_1}...a_{i_s}b_{j_p}$, $a_{i_1}b_{j_1}...a_{i_p}b_{j_p}a_{i_{p+1}}$, $b_{i_1}a_{j_1}...b_{j_p}a_{j_p}$ and $b_{i_1}a_{j_1}...b_{j_p}a_{j_p}b_{j_{p+1}}$. Make the substitution $a_i = v_i - b_i$. Thus, any element of $A*B$ can be written as a linear combination of monomials: $v_{i_1}b_{j_1}...v_{i_p}b_{j_p}$, $v_{i_1}b_{j_1}...v_{i_p}b_{j_p}v_{i_{p+1}}$, $b_{i_1}v_{j_1}...b_{j_p}v_{j_p}$ and $b_{i_1}v_{j_1}...b_{j_p}v_{j_p}b_{j_{p+1}}$. Using formula (\ref{bivj}) and induction, we get that any element $x \in A*B$ can be written in the following manner: $x = \sum^{{\rm dim}B}_{t=1} r_t b_t$ , where $r_t \in R(V)$, $b_t \in B$. 
\end{proof}

We will say that a subspace $V$ is {\it generic with respect to $\overline{A}$ and $\overline{B}$} if $V$ satisfies to the condition of proposition \ref{fmod}. 
Applying proposition \ref{comm}, we get the following:
\begin{Corollary}
\label{commfree}
Consider a free product $A*B$. Put $m = \min({\rm dim}_kA, {\rm dim}_k B)$. Let $V$ be a generic subspace with respect to $\overline{A}$ and $\overline{B}$. Then ${\rm mid} ((V,A*B)^{ab}) \le m$, i.e. $(V,A*B)^{ab}$ is a $MID_m$ - algebra.
\end{Corollary}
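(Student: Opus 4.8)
The strategy is to combine Proposition \ref{fmod} with Proposition \ref{comm}, the only real subtlety being to transport the module surjection produced by the former down to the relative abelianization. Write $R = A*B$ and $R_1 = R(V)$, and set $S = (V,A*B)^{ab}$. By the bullet-point properties established above, $S = R/I$, where $I$ is the two-sided ideal of $R$ generated by $[R_1,R_1]$; likewise $R_1^{ab} = R_1/J$, where $J$ is the two-sided ideal of $R_1$ generated by $[R_1,R_1]$. Since here $f$ is the inclusion $R(V) \subset A*B$ (a monomorphism), the third bullet point gives that $R_1^{ab}$ is a subalgebra of $S$, so $S$ is in a natural way a module over $R_1^{ab}$.

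First I would invoke Proposition \ref{fmod}: since $V$ is generic with respect to $\overline{A}$ and $\overline{B}$, the projections $\pi_A|_V$ and $\pi_B|_V$ are surjective, and hence there is a surjective morphism of $R_1$-modules $\phi: R_1^{\oplus m} \to R$, with $m = \min({\rm dim}_k A, {\rm dim}_k B)$. Composing with the quotient map $\pi: R \to S$ gives a surjection $\pi \circ \phi: R_1^{\oplus m} \to S$, and the plan is to show that this descends to a surjective morphism of $R_1^{ab}$-modules $\overline{\phi}: (R_1^{ab})^{\oplus m} \to S$.

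The key point is the ideal inclusion $J \subseteq I$: indeed $I \cap R_1$ is a two-sided ideal of $R_1$ containing $[R_1,R_1]$, so it contains the ideal $J$ generated by $[R_1,R_1]$. Writing $\phi(e_i) = w_i \in R$ for the images of the free generators of $R_1^{\oplus m}$, an element of $J^{\oplus m}$ is sent by $\pi\circ\phi$ to $\pi\bigl(\sum_i j_i w_i\bigr)$ with each $j_i \in J \subseteq I$; since $I$ is a two-sided ideal of $R$ we get $\sum_i j_i w_i \in I$, so this vanishes in $S$. Thus $\pi\circ\phi$ kills $J^{\oplus m}$ and factors through $(R_1/J)^{\oplus m} = (R_1^{ab})^{\oplus m}$, yielding the desired $\overline{\phi}$; its $R_1^{ab}$-linearity follows from the $R_1$-linearity of $\phi$ together with the multiplicativity of $\pi$, and surjectivity is inherited from $\pi\circ\phi$. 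Finally, since $R_1^{ab}$ is commutative it is a $MID_1$-algebra, so $\overline{\phi}$ furnishes exactly the hypothesis of Proposition \ref{comm} with $r = 1$ and $s = m$; the conclusion is that $S = (V,A*B)^{ab}$ is a $MID_m$-algebra, i.e. ${\rm mid}((V,A*B)^{ab}) \le m$.

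I expect the descent step to be the main obstacle: one must be sure that the surjection of Proposition \ref{fmod}, which a priori lives over the noncommutative algebra $R(V)$, genuinely survives passage to the commutative algebra $R(V)^{ab}$. Everything hinges on comparing the two abelianization ideals — the one obtained by abelianizing $R(V)$ inside itself versus the one generated by $[R(V),R(V)]$ inside $A*B$ — and the inclusion $J \subseteq I$ is what makes the factorization work. Once this is verified the remainder is a formal application of the already-established Proposition \ref{comm}.
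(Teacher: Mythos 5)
Your proof is correct and follows exactly the route the paper intends: the paper derives Corollary \ref{commfree} by combining the surjection of Proposition \ref{fmod} with Proposition \ref{comm} (via the unnamed corollary on $R_1^{ab}$ and $(R_1,R)^{ab}$), taking $r=1$ because $R(V)^{ab}$ is commutative. The only difference is that you spell out the descent of the surjection $R(V)^{\oplus m}\to A*B$ through the two abelianization ideals $J\subseteq I$, a verification the paper leaves implicit.
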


\subsection{Partial abelianization of algebra $k^{\oplus l} * k^{\oplus 2}$.}

In this subsection we study partial abelianizations of type $(V,R)^{ab}$, where $R = A*B$, $A = k^{\oplus l}$, $B = k^{\oplus 2}$ and $V \subset \overline{A} \oplus \overline{B}$.

Let $p_1,...,p_l$ and $q_1,q_2$ be sets of orthogonal primitive idempotents of $A$ and $B$ respectively.
It is easy to see that $\sum^l_{i=1}p_1 = q_1+q_2 = 1$. Idempotents $p_1,...,p_{l-1}$ and $q_1$ linearly generate subspaces $\overline{A}$ and $\overline{B}$ respectively.
Consider $V \subset \overline{A} \oplus \overline{B}$. 
We have the following map: ${\rm Gr}(r, \overline{A} \oplus \overline{B}) \setminus {\rm Gr}(r,\overline{A}) \to {\rm Gr}(r-1, \overline{A})$ defined by rule: $V \mapsto V \cap \overline{A}$. 
Denote by $A_1 \subseteq A$ the subalgebra generated by $V_1 = V \cap \overline{A}$. It is easy that algebra $A(V)$ lies in the center of $(V,R)^{ab}$

Any subalgebra of $A$ is isomorphic to $k^{\oplus s}$ for arbitrary $s$. A fixed subalgebra $A' \subset A$ gives us the decomposition of $\{1,...,l\}$ into the union of non-intersecting sets: $\{1,...,l\} = \cup^s_{j=1}I_j$. Also, $A_1$ has a basis $P_j = \sum_{i \in I_j}p_i, j = 1,...,s$. Note that $\sum^s_{j=1}P_j = 1$.

We have the trivial description of partial abelianizations $(V,R)^{ab}$ by geometric properties of $V$ as follows.
Let $\{i_1,...,i_k\}$ be a subset of $\{1,...,l\}$.
Denote by $W_{i_1,...,i_k}, k \ge 2$ the subspace of $A$ given by $x_{i_1} = x_{i_2} = ... = x_{i_k}$, where $x_i$ are the coordinates with respect to the basis $p_i, i = 1,...,l$.
For any subspace $V_1 \subset \overline{A}$  we have the following presentation of $V_1$: 
$$
V_1 \subseteq \bigcap^s_{j=1}W_{I_j},
$$
where $I_j, j = 1,...,s$ ($s$ may be zero) are non-intersecting subsets of $\{1,...,l\}$. 
Also, assume that this presentation of $V_1$ is minimal. 
In this case $A_1 \cong k^{\oplus s}$ and $A_1$ has the basis $P_j = \sum_{t \in I_j}p_k$.

Let us formulate the following proposition:
\begin{Proposition}
\begin{itemize}
    \item{If $V \subseteq \overline{A}$ then $(V,R)^{ab} = R$} 
    \item{If $V \nsubseteq \overline{A}$ and $A(V) = A$ then $(V,R)^{ab} \cong A \otimes B$, and hence, ${\rm mid}((V,R)^{ab}) = 1$.}
    \item{If $V \nsubseteq \overline{A}$ and $\max_j\{|I_j|\} = 2$ then ${\rm mid}((V,R)^{ab}) = 2$.}
    \item{If $V \nsubseteq \overline{A}$ and $\max_j \{|I_j|\} > 2$ then ${\rm mid}((V,R)^{ab}) = \infty$.}
\end{itemize}
\end{Proposition}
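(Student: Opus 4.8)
The plan is to compute the algebra $(V,R)^{ab}$ outright and then read off ${\rm mid}$ summand by summand. Since $\overline{B} = k\cdot q_1$ is one-dimensional, the projection $\pi_B|_V$ is either zero or surjective. If $V \subseteq \overline{A}$ then $V = V_1$ and every commutator $[v,v']$ with $v,v' \in V$ already vanishes in $A*B$ because $A$ is commutative; hence no relation is imposed and $(V,R)^{ab} = A*B = R$, giving the first bullet. In the remaining three cases $V \nsubseteq \overline{A}$, so $\pi_B|_V$ is surjective and one may write $V = V_1 \oplus k\,w$ with $V_1 = V \cap \overline{A}$ and $w = a_0 + q_1$, $a_0 \in \overline{A}$.

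First I would reduce the defining relations. The ideal defining $(V,R)^{ab}$ is generated by $[v,v']$, $v,v' \in V$. For $v,v' \in V_1$ these vanish ($A$ commutative) and $[w,w]=0$, so the only surviving relations are $[v,w] = [v,a_0] + [v,q_1] = [v,q_1]$ for $v \in V_1$. Since $x \mapsto [x,q_1]$ is a derivation, its kernel is a subalgebra, so in the quotient $q_1$ commutes with the whole subalgebra $A_1 = A(V_1)$. As $V_1 \subseteq A_1 = {\rm span}\{P_j\}$, the ideal generated by $\{[v,q_1] : v \in V_1\}$ coincides with the ideal generated by $\{[P_j,q_1] : j = 1,\dots,s\}$. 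Thus $(V,R)^{ab}$ is $A*B$ modulo the relations $[P_j,q_1]=0$.

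Next I would prove the structure theorem. Each $P_j$ lies in $A_1 \subseteq A$, hence commutes with $A$ (commutative) and with $q_1$ (by the relation), so with all of $B$; therefore the orthogonal idempotents $P_1,\dots,P_s$, which sum to $1$, are central in $(V,R)^{ab}$ and induce a decomposition $(V,R)^{ab} = \bigoplus_{j=1}^s P_j\,(V,R)^{ab}$. The $j$-th summand is generated by the orthogonal idempotents $\{p_i : i \in I_j\}$ (summing to $P_j$) and $\{q_1 P_j, q_2 P_j\}$ (summing to $P_j$) with no relation between the two families, so it should be isomorphic to $k^{\oplus |I_j|} * k^{\oplus 2}$; one checks this is a genuine isomorphism by exhibiting the inverse map $\bigoplus_j \left(k^{\oplus |I_j|} * k^{\oplus 2}\right) \to (V,R)^{ab}$ sending block idempotents to $p_i$ and $q_a P_j$ and using that $P_j = \sum_{i\in I_j} p_i$ is central in the target. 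Hence $(V,R)^{ab} \cong \bigoplus_{j=1}^s \left(k^{\oplus |I_j|} * k^{\oplus 2}\right)$. I expect this isomorphism — specifically verifying that imposing only $[P_j,q_1]=0$ forces no further collapse inside a block, so that each block is the full free product — to be the main technical point.

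Finally, since the irreducible representations of a finite direct sum of algebras are exactly the irreducibles of the summands, ${\rm mid}((V,R)^{ab}) = \max_j {\rm mid}(k^{\oplus |I_j|} * k^{\oplus 2})$. Now $k^{\oplus 1} * k^{\oplus 2} = k^{\oplus 2}$ is commutative with ${\rm mid} = 1$; by Example~\ref{exfir}, ${\rm mid}(k^{\oplus 2} * k^{\oplus 2}) = 2$; and by Example~\ref{exsec} (with $s = |I_j| \ge 3$, $l = 2$), ${\rm mid}(k^{\oplus |I_j|} * k^{\oplus 2}) = \infty$ for $|I_j| \ge 3$. The condition $A(V) = A$ is equivalent to the partition being into singletons, i.e. $\max_j |I_j| = 1$; then every block contributes $1$ and moreover $(V,R)^{ab} \cong \bigoplus_{i=1}^l k^{\oplus 2} \cong A \otimes B$, giving the second bullet. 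If $\max_j |I_j| = 2$ the maximum of the block values is $2$ (third bullet), and if $\max_j |I_j| > 2$ some block contributes $\infty$ (fourth bullet).
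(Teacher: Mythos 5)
Your proof is correct and follows essentially the same route as the paper: both arguments exploit that $A(V)$ is central in $(V,R)^{ab}$ to split the algebra into blocks (you via the central idempotents $P_j$, the paper via characters $\chi$ of $A(V)$, which is the same decomposition since $(V,R)^{ab}\otimes_{A(V)}k^{\chi_j}=P_j(V,R)^{ab}$), identify each block with $k^{\oplus|I_j|}*k^{\oplus 2}$, and then invoke Examples \ref{exfir} and \ref{exsec}. You supply more detail than the paper (the derivation argument reducing the relations to $[P_j,q_1]=0$ and the explicit inverse map for the block isomorphism, which the paper dismisses with ``one can check''), but the underlying idea is identical.
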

\begin{proof}
First and second statements are trivial.
It is easy to see that irreducible $(V,R)^{ab}$ -module defines the character of $A(V)$. 
One can check that $(V,R)^{ab} \otimes_{A(V)} k^{\chi} \cong k^{\oplus |I_j|} * k^{\oplus 2}$ for $A(V)$ - character $\chi$. 
Applying examples \ref{exfir} and \ref{exsec}, we get third and fourth statements.
\end{proof}

\section{Partial abelianization of $k^{\oplus 3}*k^{\oplus 3}$.}
\label{secsec}

In this section we study the partial abelianization of $R = A*B = k^{\oplus 3} * k^{\oplus 3}$ with respect to the subalgebra generated by generic $V \subset \overline{A} \oplus \overline{B}$.

\subsection{Preliminary remarks.}
In this section we demonstrate some geometry related to partial abelianization.

Consider a variety ${\rm Gr} = {\rm Gr}(2,\overline{A} \oplus \overline{B})$
parameterizing two-dimensional subspaces in $\overline{A} \oplus \overline{B}$.
We have the following mapping:
$f: {\rm Gr} \dasharrow {\mathbb P}([\overline{A}, \overline{B}])$ defined by rule: 
$V \mapsto [V,V] \in {\mathbb P}([\overline{A}, \overline{B}])$.
It is easy to see that the mapping $f$ is not well-defined in two points corresponding to subspaces $\overline{A}$ and $\overline{B}$. Clearly, $(\overline{A},R)^{ab} = R$ and $(\overline{B},R)^{ab} = R$.

Mapping $f$ is a composition of Plucker embedding ${\rm Gr} \to {\mathbb P}^5 = {\mathbb P} (\Lambda^2(\overline{A} \oplus \overline{B}))$ and the projection ${\mathbb P} (\Lambda^2(\overline{A} \oplus \overline{B})) \setminus L \to {\mathbb P}([\overline{A} \oplus \overline{B}])$, where $L$ is a line in ${\mathbb P}^5$ intersect ${\rm Gr}$ in two points corresponding to subspaces $\overline{A}$ and $\overline{B}$.
It is easy to see that $f$ is surjective. 
For any $x \in {\mathbb P}^3 = {\mathbb P}([\overline{A}, \overline{B}])$ pre-image $f^{-1}(x)$ is isomorphic to conic without two points.
Let $p_i, i = 1,2,3$ and $q_i, i = 1,2,3$ be the sets of orthogonal primitive idempotents of $A$ and $B$ respectively. Let $p_1,p_2$ and $q_1,q_2$ are bases of $\overline{A}$ and $\overline{B}$ respectively.
Basis of $[\overline{A},\overline{B}]$ is $[p_i,q_j], i,j = 1,2$. Denote by $x_{ij}, i,j = 1,2$ the coordinates of ${\mathbb P}^3$ with respect to the basis $[p_i,q_j]$. Since $f$ is surjective, we get that any abelianization $(V,R)^{ab}, V \in {\rm Gr}, V \ne \overline{A}, \overline{B}$  is a quotient of $k^{\oplus 3}*k^{\oplus 3}$ by relationship:
\begin{equation}
\label{commrel}
\sum^2_{i,j=1}x_{ij}[p_i,q_j] = 0    
\end{equation}
for an arbitrary $x = (x_{11}:x_{12}:x_{21}:x_{22}) \in {\mathbb P}^3$.
Describe the map $f$ in coordinates.
Consider Zariski-open subset $U = {\mathbb C}^4 \subset {\rm Gr}$ consisting of subspaces which have the basis of the following type: $p_1 - y_2q_1-y_3q_2$ and $p_2+y_0q_1+y_1q_2$. 
It is obvious that $[p_1-y_2q_1-y_3q_2,p_2+y_0q_1+y_1q_2] = y_0[p_1,q_1]+y_1[p_1,q_2]+y_2[p_2,q_1]+y_3[p_2,q_2]$.
Thus, $f|_U: U \dasharrow {\mathbb P}^3$ is given by the formula: $(y_0,y_1,y_2,y_3) \mapsto (y_0:y_1:y_2:y_3)$.


It is easy $(V,R)^{ab}, V \ne \overline{A}, \overline{B}$ is a sheaf of algebras over ${\mathbb P}^3$.
Denote by $S_x, x \in {\mathbb P}^3$ the algebra $(V,R)^{ab}, f(V) = x$.

Further, consider $V \in {\rm Gr}$ which does not satisfies to the conditions of corollary \ref{commfree}, i.e. the restriction of $\pi_A$ (or $\pi_B$) to $V$ is not a surjective morphism. In this case we get that $V \cap \overline{A} \ne 0$ or $V \cap \overline{B} \ne 0$.
Denote by $Z_A$ and $Z_B$ the divisors of ${\rm Gr}$ consisting of $V$ such that $V \cap \overline{A} \ne 0$ and $V \cap \overline{B} \ne 0$ respectively.
\begin{Proposition}
Consider quadric $Q \subset {\mathbb P}^3$ given by the equation: $x_{11}x_{22} = x_{12}x_{21}$. We have the following statements:
\begin{itemize}
    \item{$f(Z_A) = f(Z_B) = Q$ and $f^{-1}(Q) = Z_A \cup Z_B$} 
    \item{If $x$ is a generic point of $Q$ then the relation (\ref{commrel}) can be written in the following manner: $[a,b] = 0$ for some $a \in \overline {A}$ and $b \in \overline{B}$}
\end{itemize}
\end{Proposition}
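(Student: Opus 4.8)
The plan is to reduce both bullets to a single determinantal identity expressing the defining equation of $Q$ in the affine chart already set up for $f$. I would parametrize a general $V \in {\rm Gr}$ by a basis $u = a_1 + b_1$, $v = a_2 + b_2$ with $a_1,a_2 \in \overline{A}$ and $b_1,b_2 \in \overline{B}$, and record this data in two $2\times 2$ matrices: $M_A = (\alpha_{ri})$, where $a_r = \alpha_{r1}p_1 + \alpha_{r2}p_2$, and $M_B = (\beta_{rj})$, where $b_r = \beta_{r1}q_1 + \beta_{r2}q_2$. The first point is a dictionary between the divisors and these matrices: a nonzero combination $c_1 u + c_2 v$ lies in $\overline{A}$ exactly when $c_1 b_1 + c_2 b_2 = 0$, so $V \in Z_A$ iff $b_1,b_2$ are linearly dependent, i.e. $\det M_B = 0$; symmetrically $V \in Z_B$ iff $\det M_A = 0$.

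Next I would compute $f(V) = [u,v]$. Since $A$ and $B$ are commutative one has $[a_1,a_2] = [b_1,b_2] = 0$, hence $[u,v] = [a_1,b_2] - [a_2,b_1] = \sum_{i,j}(\alpha_{1i}\beta_{2j} - \alpha_{2i}\beta_{1j})[p_i,q_j]$, so the coordinates of $f(V)$ are $x_{ij} = \alpha_{1i}\beta_{2j} - \alpha_{2i}\beta_{1j}$. The key step --- and the only genuinely computational one --- is the identity $x_{11}x_{22} - x_{12}x_{21} = \det M_A \cdot \det M_B$, which I expect to verify by a direct expansion in which several of the degree-four monomials cancel in pairs while the survivors collect into the product of the two determinants. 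Granting this, the defining equation of $Q$ vanishes at $f(V)$ precisely when $\det M_A = 0$ or $\det M_B = 0$, that is, precisely when $V \in Z_B \cup Z_A$. Read on the domain of the rational map $f$ (away from the two indeterminacy points $\overline{A},\overline{B}$, where $[V,V]=0$), this is exactly $f^{-1}(Q) = Z_A \cup Z_B$, and it also gives the inclusions $f(Z_A),f(Z_B) \subseteq Q$.

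For the reverse inclusions and the second bullet I would switch to the rank interpretation of the coordinate matrix $(x_{ij})$. A point $x \in Q \subset {\mathbb P}^3$ satisfies $\det(x_{ij}) = 0$, so $(x_{ij})$ has rank at most $1$; since $x \neq 0$ in projective space, the rank is exactly $1$, and thus $x_{ij} = \lambda_i\mu_j$ for some nonzero $(\lambda_1,\lambda_2)$ and $(\mu_1,\mu_2)$. Putting $a = \lambda_1 p_1 + \lambda_2 p_2 \in \overline{A}$ and $b = \mu_1 q_1 + \mu_2 q_2 \in \overline{B}$ gives $\sum_{i,j} x_{ij}[p_i,q_j] = [a,b]$, so relation (\ref{commrel}) reads $[a,b] = 0$ with $a,b \neq 0$; this is the second statement (and it in fact holds at every point of $Q$, the word ``generic'' only discarding degenerate factorizations). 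The same factorization realizes $x$ in $f(Z_A)$: taking $V$ spanned by $u = b \in \overline{B}$ and $v = -a \in \overline{A}$, the commutator formula yields $[u,v] = [a,b] = \sum_{i,j} x_{ij}[p_i,q_j]$, while $v \in V \cap \overline{A}$ shows $V \in Z_A$; hence $f(Z_A) = Q$, and the symmetric choice gives $f(Z_B) = Q$.

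I expect the only real obstacle to be bookkeeping rather than conceptual: carrying out the expansion behind $x_{11}x_{22}-x_{12}x_{21} = \det M_A \cdot \det M_B$ cleanly, and being careful that the equality $f^{-1}(Q) = Z_A \cup Z_B$ is understood on the domain of $f$, since $f$ is undefined at $\overline{A},\overline{B} \in Z_A \cup Z_B$. Everything else follows directly from the two $2\times 2$ determinants and the rank-one factorization on the quadric.
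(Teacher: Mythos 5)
Your proposal is correct, and it is essentially the paper's argument: the paper's proof is literally ``By direct calculations,'' and what you have written is that calculation carried out, organized around the identity $x_{11}x_{22}-x_{12}x_{21}=\det M_A\cdot\det M_B$ (which does check out on expansion) together with the rank-one factorization $x_{ij}=\lambda_i\mu_j$ on $Q$. The only refinements worth noting are ones you already flag yourself: the equality $f^{-1}(Q)=Z_A\cup Z_B$ is read on the domain of the rational map $f$, and the commutator form $[a,b]=0$ in fact holds at every point of $Q$, not only generic ones.
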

\begin{proof}
By direct calculations. 
\end{proof}

Denote by $l_i \subset Q \subset {\mathbb P}^3 , i =1,2,3$ the lines given by the equations:
$x_{21} = x_{22} = 0$, $x_{11} = x_{12} = 0$ and $x_{11} = x_{21}, x_{12} = x_{22}$ respectively.
Also, denote by $l'_i \subset Q \subset {\mathbb P}^3, i = 1,2,3$ the lines given by
$x_{12} = x_{22} = 0$, $x_{11} = x_{21} = 0$ and $x_{11} = x_{12}, x_{21} = x_{22}$, respectively.
Denote by $pt_{ij} = l_i \cap l'_j, i,j = 1,2,3$.

Thus, we deduce the following corollary:
\begin{Corollary}
We have the following classification of $S_x, x \in Q$ (or $(V,R)^{ab}$ for $f(V) = x \in Q$):
\begin{itemize}
    \item{if $x = pt_{ij}, i,j = 1,2,3$ then $S_x$ is isomorphic to quotient $k^{\oplus 3}*k^{\oplus 3}$ by the relation $[p_{i_0},q_{j_0}] = 0$ for arbitrary $i_0$ and $j_0$. Moreover, ${\rm mid}(S_x) = \infty$.} 
    \item{if $x \in (\bigcup^3_{i=1} l_i \cup l'_i) \setminus \bigcup^{3}_{i,j} pt_{ij}$, then $S_x$ is isomorphic to the quotient of $k^{\oplus 3}*k^{\oplus 3}$ by the relations: $[p_i,q_{j_0}] = 0, i = 1,2$ for fixed $j_0$ or by relations $[p_{i_0},q_j] = 0, j = 1,2$ for fixed $i_0$. Moreover, ${\rm mid}(S_x) = \infty$.}
    \item{if $x \in Q \setminus (\bigcup^3_{i=1} l_i \cup l'_i)$, then $S_x$ is isomorphic to $k^{\oplus 9}$ and, hence, ${\rm mid}(S_x) = 1$.}
\end{itemize}
\end{Corollary}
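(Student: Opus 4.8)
The plan is to leverage the preceding proposition, which tells us that on $Q$ the single relation~(\ref{commrel}) collapses to $[a,b]=0$, and then to translate this geometric relation into the combinatorial idempotent relations that govern ${\rm mid}$. For $x\in Q$ the matrix $(x_{ij})$ has rank one, so I would factor $x_{ij}=\alpha_i\beta_j$ and rewrite~(\ref{commrel}) as $[a,b]=0$ with $a=\alpha_1 p_1+\alpha_2 p_2\in\overline A$ and $b=\beta_1 q_1+\beta_2 q_2\in\overline B$. Because a commutator is insensitive to rescaling and to adding scalar multiples of $1$, the relation depends on $a$ only through the partition of $\{p_1,p_2,p_3\}$ into the eigenspaces of $a$ (eigenvalues $\alpha_1,\alpha_2,0$), and likewise for $b$. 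I would record that $a$ is proportional, modulo $1$, to a single primitive idempotent precisely when two of these eigenvalues coincide, and is generic (three distinct eigenvalues) otherwise.

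The crucial step is an equality of two-sided ideals. If $a$ has three distinct eigenvalues then each $p_i=g_i(a)$ for the Lagrange interpolation polynomial $g_i$, an identity already valid in the commutative algebra $k^{\oplus 3}$ and hence in $k^{\oplus 3}*k^{\oplus 3}$. Expanding $[g_i(a),b]$ and using the telescoping identity $[a^m,b]=\sum_{t} a^t[a,b]a^{m-1-t}$, I would conclude that every $[p_i,b]$ lies in the two-sided ideal $([a,b])$; repeating the argument in the second variable (when $b$ is also generic, so $q_j=f_j(b)$) gives $[p_i,q_j]\in([a,b])$ for all $i,j$. Since conversely $[a,b]$ is a linear combination of the $[p_i,q_j]$, the two ideals coincide. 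When only $a$ is generic and $b$ is proportional modulo $1$ to a single $q_{j_0}$, the same reasoning yields $([a,b])=([p_1,q_{j_0}],[p_2,q_{j_0}])$, and symmetrically when only $b$ is generic.

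With this in hand the three cases fall out by matching the loci to the degeneracy of $a$ and $b$: $x\in l_i$ means $a$ is proportional to one $p_{i_0}$, $x\in l'_j$ means $b$ is proportional to one $q_{j_0}$, and the nine points $pt_{ij}$ are exactly where both degenerate. Off all the lines both factors are generic, so $([a,b])$ equals the ideal generated by all the $[p_i,q_j]$, whence $S_x\cong k^{\oplus 3}\otimes k^{\oplus 3}\cong k^{\oplus 9}$, which is commutative with ${\rm mid}=1$. On a line away from the points exactly one factor degenerates, producing the stated pair of relations $[p_{i_0},q_j]=0,\ j=1,2$ (or $[p_i,q_{j_0}]=0,\ i=1,2$); at $pt_{ij}$ both degenerate and only the single relation $[p_{i_0},q_{j_0}]=0$ survives.

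It then remains to show ${\rm mid}(S_x)=\infty$ in the line and point cases. In all of these algebras every defining relation is a commutator with one fixed idempotent --- some $p_{i_0}$ (for $l_i$ and for $pt_{ij}$) or some $q_{j_0}$ (for $l'_j$) --- so sending that idempotent to $0$ yields an algebra homomorphism onto $k^{\oplus 2}*k^{\oplus 3}$ (resp.\ $k^{\oplus 3}*k^{\oplus 2}$) under which the imposed relations hold automatically, and this homomorphism is surjective. By Example~\ref{exsec} the target is an $MID_\infty$-algebra, and since any algebra admitting a quotient that is not an $MID_r$-algebra is itself not an $MID_r$-algebra, each such $S_x$ is $MID_\infty$. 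I expect the main obstacle to be the ideal equality of the second paragraph: one must verify that the reduction from the geometric relation $[a,b]=0$ to the combinatorial relations $[p_i,q_j]=0$ is genuinely an identity of two-sided ideals and not merely an equivalence at the level of representations, which is exactly what the Lagrange-polynomial computation secures.
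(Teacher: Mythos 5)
Your argument is correct and follows essentially the same route as the paper: for the first two cases you quotient by the distinguished idempotent to obtain a surjection onto $k^{\oplus 2}*k^{\oplus 3}$ and invoke Example~\ref{exsec}, exactly as the paper does, and your Lagrange-interpolation/telescoping-commutator computation is precisely the ``standard arguments'' the paper cites without detail to reduce the third case to $[p_i,q_j]=0$ for all $i,j$. The only added value of your write-up is that it makes explicit the rank-one factorization $x_{ij}=\alpha_i\beta_j$ on $Q$ and the resulting equality of two-sided ideals, which the paper takes for granted.
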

\begin{proof}

In the first case consider the quotient of $S_x$ by the ideal generated by $p_{i_0}$. We get that surjective morphism: $S_x \to k^{\oplus 2} * k^{\oplus 3}$. Using example \ref{exsec}, we get that ${\rm mid}(S_x) = \infty$. Analogous arguments show us that ${\rm mid}(S_x) = \infty$ in the second case.
Using the standard arguments, we get that $[a,b] = 0$ for any $a \in A$ and $b \in B$, and hence, ${\rm mid}(S_x) = 1$.

\end{proof}

\subsection{A combinatorial description of partial abelianization.}

In this subsection we study algebra $S_x$ for generic $x \in {\mathbb P}^3$ by combinatorial methods. 

Assume that $V \subset \overline{A} \oplus \overline{B}$ is a generic subspace.
Denote by ${\mathbb F}$ the field $k(y_1,y_2,y_3)$,  ${\cal A}$ and ${\cal B}$ the ${\mathbb F}$-algebras $A \otimes_k{\mathbb F}$ and $B \otimes_k{\mathbb F}$ respectively.
Also, denote by ${\cal S}$ the unital ${\mathbb F}$-algebra which is a quotient of ${\cal R} = R \otimes_k {\mathbb F} = {\cal A} * {\cal B}$ (here the free product is given over $\mathbb F \cdot 1$) by the relation:
\begin{equation}
\label{relx}
X = [p_1,q_1] + y_1[p_1,q_2] + y_2[p_2,q_1]+y_3[p_2,q_2] = 0.
\end{equation}
In this subsection we get an estimation of the dimension of $\cal S$ over ${\mathbb F}$.

Obviously, $1,p_1,p_2$ and $1,q_1,q_2$ are the linear bases of ${\cal A}$ and ${\cal B}$ respectively.
It is easy to see that there is a ${\mathbb F}$ - basis of ${\cal R}$ of the following type: $p_{i_1}q_{j_1}...p_{i_s}$, $q_{i_1}p_{j_1}...q_{j_s}$, $p_{i_1}q_{j_1}...p_{i_s}q_{j_s}$ and $q_{i_1}p_{j_1}...q_{i_s}p_{j_s}$ where $s$ runs over all integers.
Thus, one can define the 'length' function of ${\cal R}$ by obvious way.
Also, there is a filtration $F$ on ${\cal R}$ given by:
\begin{itemize}
    \item{$F^0 {\cal R} = {\mathbb F}\cdot 1$}
    \item{$F^s {\cal R}$ is a vector space with basis of elements of length less or equal $s$.}
\end{itemize}
One can check that $F^i{\cal R} \cdot F^j{\cal R} \subseteq F^{i+j}{\cal R}$ for any $i,j$.
It is easy that ${\rm dim}_{\mathbb F}F^k{\cal R} = 2^{k+2} - 3$.
Denote by $I(X)$ the ideal generated by the element $X \in F^2{\cal R}$ from (\ref{relx}).
There is a natural filtration $F$ on algebra ${\cal S}$ induced by the filtration on ${\cal R}$.

Consider 13 elements: $X, p_iX, Xp_i, q_iX, Xq_i, p_iXp_j, q_iXq_j$ for $i,j = 1,2, i \ne j$ of length less or equal to $3$ and 40 elements of length $4$: 
$p_i q_j X, p_i X q_j, X p_i q_j, q_i p_j X, q_i X p_j, X q_i p_j, i,j = 1,2$, 
$p_i X p_j q_k, q_k p_i X p_j, q_i X q_j p_k, p_k q_i X q_j, i\ne j, i,j,k = 1,2$. 
Denote by $N \subset F^4 I(X)$ the space generated by these elements.
Using computations on Maple, we get the following:
\begin{Proposition}
${\rm dim}_{\mathbb F} N = 42$. Thus, ${\rm dim}_{\mathbb F}F^4 I(X) \ge 42$ and, hence, ${\rm dim}_{\mathbb F}F^4 {\cal S} \le 19$.
\end{Proposition}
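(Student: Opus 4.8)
The plan is to turn the statement into a single rank computation over $\mathbb{F} = k(y_1,y_2,y_3)$ and to organize it so that most of the bookkeeping is forced by the grading. First I would fix the monomial basis of $F^4\mathcal{R}$: the $2^6-3 = 61$ alternating words of length $\le 4$ in the letters $p_1,p_2,q_1,q_2$, sorted by length ($1$ word of length $0$, $4$ of length $1$, $8$ of length $2$, $16$ of length $3$, $32$ of length $4$). Using only the definition of $X$ in (\ref{relx}) and the orthogonality relations $p_ip_j = \delta_{ij}p_i$, $q_iq_j = \delta_{ij}q_i$, I would expand each of the $13 + 40 = 53$ listed generators into an explicit $\mathbb{F}$-linear combination of these monomials. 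A short check shows no generator has a component of length $\le 1$ (for instance $p_iX$ lands in lengths $2$ and $3$, while $p_iXq_j$ lands in lengths $2$ and $4$), so $N$ lives in the $56$-dimensional span of the words of length $2,3,4$, and $\dim_{\mathbb{F}}N$ is exactly the rank of the resulting $53\times 56$ coefficient matrix $M$.

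Second I would exploit the two gradings hidden in $M$. The length grading makes $M$ block upper-triangular with diagonal blocks in lengths $4$, $3$, $2$; within each length the \emph{shape} of a word (the alternation pattern $pqp$ versus $qpq$, or $pqpq$ versus $qpqp$) splits each diagonal block into two pieces exchanged by $p\leftrightarrow q$. The low blocks can be done by hand: the length-$2$ block is the single row $X$ and has rank $1$; the length-$3$ diagonal block splits into two $6\times 8$ pieces, and a direct elimination shows each has rank $5$ (each carries exactly one relation among its six rows). Tracking the length-$2$ tails of these combinations, one checks that in the \emph{full} matrix these relations become the two clean dependencies
\[
p_1X+p_2X+Xp_1+Xp_2-p_1Xp_2-p_2Xp_1 = X, \qquad q_1X+q_2X+Xq_1+Xq_2-q_1Xq_2-q_2Xq_1 = X,
\]
so the span of the $13$ generators of length $\le 3$ has dimension $11$.

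The bulk of the work, and the real obstacle, is the length-$4$ part: the $40$ length-$4$ generators contribute a $40\times 16$ block in each shape, and --- crucially --- each of them also has length-$3$ and length-$2$ tails (e.g. $p_iXq_i$ produces the length-$2$ monomial $p_iq_i$), so the off-diagonal blocks genuinely couple the graded pieces. Consequently the rank does \emph{not} factor as a sum of diagonal-block ranks, and one is forced to row-reduce the whole $53\times 56$ matrix. To finish I would compute this rank over $\mathbb{F}$ either by symbolic Gaussian elimination or, more cheaply, by specializing $(y_1,y_2,y_3)$ to random values in $k$: since rank is lower semicontinuous, the rank at a generic point equals the generic rank over $\mathbb{F}$, and the statement $\operatorname{rank}M = 42$ amounts to the nonvanishing of one $42\times 42$ minor (a nonzero polynomial in $y_1,y_2,y_3$) together with the vanishing of all $43\times 43$ minors. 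This nonvanishing is precisely the genericity hypothesis on $x$, and certifying it is exactly the content of the Maple computation.

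Finally, granting $\dim_{\mathbb{F}}N = 42$ and $N\subseteq F^4I(X)$ gives $\dim_{\mathbb{F}}F^4I(X)\ge 42$; since $F^4\mathcal{S} = F^4\mathcal{R}/(I(X)\cap F^4\mathcal{R})$ and $\dim_{\mathbb{F}}F^4\mathcal{R}=61$, we conclude $\dim_{\mathbb{F}}F^4\mathcal{S} = 61 - \dim_{\mathbb{F}}F^4I(X)\le 19$. I would note that for the \emph{inequality} $\dim F^4\mathcal{S}\le 19$ alone only the lower bound $\dim N\ge 42$ is needed, which is the easier half (exhibit $42$ independent rows), whereas the stated equality $\dim N = 42$ requires the additional vanishing of all larger minors.
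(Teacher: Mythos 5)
Your proposal is correct and follows essentially the same route as the paper, which simply reduces the claim to a rank computation over $\mathbb{F}$ and certifies it by Maple; your setup (the $61$-dimensional space $F^4\mathcal{R}$, the $53\times 56$ coefficient matrix, and the identity $\dim_{\mathbb{F}}F^4\mathcal{S}=61-\dim_{\mathbb{F}}F^4I(X)$) is the correct formalization of that computation, and your two explicit dependencies among the length-$\le 3$ generators check out. Your closing remark is also accurate: random specialization plus lower semicontinuity certifies only $\dim_{\mathbb{F}}N\ge 42$ (which suffices for the bound $\dim_{\mathbb{F}}F^4\mathcal{S}\le 19$), while the stated equality $\dim_{\mathbb{F}}N=42$ requires the symbolic vanishing of the $43\times 43$ minors.
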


There exist the following linear generators of $F^4 {\cal S}$:
\begin{itemize}
\item{$1,p_1,p_2,q_1,q_2$}
\item{$p_1q_2,p_2q_1,p_2q_2,q_1p_1,q_1p_2,q_2p_1,q_2p_2$}
\item{$p_2q_1p_2,p_2q_2p_1,q_1p_1q_1,q_1p_2q_2,q_2p_2q_2$}
\item{$p_2q_1p_1q_1,q_2p_1q_1p_1$}
\end{itemize}
(these elements may be linearly dependent). 
Also, the calculations on Maple get us the following identities: 
\begin{equation}
\label{pq}
p_iq_jp_kq_l = \alpha(i,j,k,l) \cdot p_2q_1p_1q_1 + w_1(i,j,k,l)    
\end{equation}
and 
\begin{equation}
\label{qp}
q_ip_jq_kp_l = \beta(i,j,k,l) \cdot q_2p_1q_1p_1 + w_2(i,j,k,l)    
\end{equation}
for $\alpha(i,j,k,l), \beta(j,i,k,l) \in {\mathbb F}$ and $w_1(i,j,k,l), w_2(i,j,k,l) \in F^3 {\cal S}$.

Consider an element $z = p_{i_1}q_{j_1}p_{i_2}q_{j_2}p_{i_3}$. Using the identities (\ref{pq}) and (\ref{qp}), we obtain that
$$
z = p_{i_1} \cdot q_{j_1}p_{i_2}q_{j_2}p_{i_3} = \beta(j_1,i_2,j_2,i_3) (p_{i_1}q_2p_1q_1)p_1 + p_{i_1}w_2(j_1,i_2,j_2,i_3) = 
$$
$$
\alpha(i_1,2,1,1) \beta(j_1,i_2,j_2,i_3) p_2q_1p_1q_1p_1 + w_1(i_1,2,1,1) p_1 + p_{i_1}w_2(j_1,i_2,j_2,i_3),
$$
$w_1(i_1,2,1,1)p_1, p_{i_1}w_2(j_1,i_2,j_2,i_3) \in F^4 {\cal S}$. Analogously, one can show that 
$$
q_{i_1}p_{j_1}q_{i_2}p_{j_2}q_{i_3} = \beta(i_1,2,1,1)\alpha(j_1,i_2,j_2,i_3) q_2p_1q_1p_1q_1 + w',
$$  
where $w' \in F^4 {\cal S}$. 
Thus, $F^5{\cal S} / F^4 {\cal S}$ is generated by $p_2q_1p_1q_1p_1$ and $q_2p_1q_1p_1q_1$ (they may be linearly dependent). 

Further, consider element $p_2q_1p_1q_1p_1$. We have the following identity: $p_2 \cdot q_1p_1q_1p_1 = \beta(1,1,1,1)  p_2q_2p_1q_1p_1 + w' = \alpha(2,2,1,1) \beta(1,1,1,1) p_2q_1p_1q_1p_1 + w''$, where $w',w'' \in F^4 {\cal S}$. Direct calculations show us that $\alpha(2,2,1,1) \beta(1,1,1,1) \ne 1$. Thus, $p_2q_1p_1q_1p_1 \in F^4 {\cal S}$. Analogously, $q_2p_1q_1p_1q_1 \in F^4 {\cal S}$. Hence, $F^i {\cal S} / F^4 {\cal S} = 0$ for $i \ge 5$. 

We have proved the following statement:
\begin{Proposition}
\label{estdim}
Algebra $\cal S$ is a finite-dimensional ${\mathbb F}$ - algebra. Also, we have the following inequality:
\begin{equation}
{\rm dim}_{\mathbb F} {\cal S} \le 19.
\end{equation}
\end{Proposition}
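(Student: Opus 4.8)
The plan is to show that the natural filtration $F$ on ${\cal S}$ stabilizes at level four, so that ${\cal S} = F^4{\cal S}$; since the bound established just above gives ${\rm dim}_{\mathbb F}F^4{\cal S} \le 19$, this at once yields both finite-dimensionality and the claimed inequality. Since ${\cal S} = {\cal R}/I(X) = \bigcup_k F^k{\cal S}$, the whole task reduces to proving $F^k{\cal S} = F^4{\cal S}$ for every $k \ge 4$. Because the filtration is multiplicative, $F^i{\cal R}\cdot F^j{\cal R}\subseteq F^{i+j}{\cal R}$, it will be enough to verify the single step $F^5{\cal S} = F^4{\cal S}$ and then propagate it upward by induction.

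For the step $F^5 = F^4$ I would first cut down a spanning set of $F^5{\cal S}/F^4{\cal S}$. Every length-five basis monomial of ${\cal R}$ has one of the two alternating shapes $p_{i_1}q_{j_1}p_{i_2}q_{j_2}p_{i_3}$ or $q_{i_1}p_{j_1}q_{i_2}p_{j_2}q_{i_3}$. Peeling off the outer factor and applying the normal-form identities (\ref{pq}) and (\ref{qp}) — which rewrite each alternating length-four product $p_iq_jp_kq_l$ (resp. $q_ip_jq_kp_l$) as a scalar multiple of the distinguished monomial $p_2q_1p_1q_1$ (resp. $q_2p_1q_1p_1$) modulo $F^3{\cal S}$ — reduces any such monomial, modulo $F^4{\cal S}$, to a scalar multiple of $p_2q_1p_1q_1p_1$ or $q_2p_1q_1p_1q_1$. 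Hence $F^5{\cal S}/F^4{\cal S}$ is spanned by these two elements alone.

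It then remains to show these two monomials already lie in $F^4{\cal S}$. The mechanism is a self-referential relation: applying (\ref{qp}) and then (\ref{pq}) to $p_2\cdot(q_1p_1q_1p_1)$ produces $\alpha(2,2,1,1)\beta(1,1,1,1)\cdot p_2q_1p_1q_1p_1$ plus an element of $F^4{\cal S}$, so that $\bigl(1-\alpha(2,2,1,1)\beta(1,1,1,1)\bigr)\,p_2q_1p_1q_1p_1 \in F^4{\cal S}$. Provided the scalar satisfies $\alpha(2,2,1,1)\beta(1,1,1,1)\ne 1$, we may divide and conclude $p_2q_1p_1q_1p_1 \in F^4{\cal S}$; the symmetric computation handles $q_2p_1q_1p_1q_1$. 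Therefore $F^5{\cal S}/F^4{\cal S}=0$. Propagation is then formal: $F^k{\cal S}$ is spanned by products of the degree-one generators $p_i,q_j$ with the length-four generators, and every such product of length $\ge 5$ collapses into $F^4{\cal S}$ by the same rewriting, so $F^k{\cal S}=F^4{\cal S}$ for all $k\ge 4$, whence ${\cal S}=F^4{\cal S}$.

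The hard part will be the explicit computation underlying this reduction, not the filtration bookkeeping. Concretely, one must first establish the normal-form identities (\ref{pq}) and (\ref{qp}) with their coefficients $\alpha,\beta$, and then verify the non-degeneracy $\alpha(2,2,1,1)\beta(1,1,1,1)\ne 1$ as a rational function over ${\mathbb F}=k(y_1,y_2,y_3)$ — this inequality is precisely what prevents the top monomials from being genuinely new in degree five, and it is the single place where the argument could in principle fail. Together with the ${\rm dim}_{\mathbb F}N=42$ computation that feeds the bound ${\rm dim}_{\mathbb F}F^4{\cal S}\le 19$, this is exactly where the Maple calculations are indispensable; once those coefficient computations are in hand, the passage from them to finite-dimensionality and the bound in Proposition \ref{estdim} is purely formal.
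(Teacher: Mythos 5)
Your proposal is correct and follows essentially the same route as the paper: bound $\dim_{\mathbb F}F^4{\cal S}\le 19$ via the Maple computation $\dim_{\mathbb F}N=42$, reduce every length-five monomial modulo $F^4{\cal S}$ to $p_2q_1p_1q_1p_1$ or $q_2p_1q_1p_1q_1$ using the normal-form identities (\ref{pq}) and (\ref{qp}), and absorb these via the self-referential relation with $\alpha(2,2,1,1)\beta(1,1,1,1)\ne 1$. You also correctly isolate the Maple-verified coefficient computations as the only non-formal ingredients, exactly as in the paper.
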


We have a well-defined action of symmetric group $\Sigma_3$ on ${\cal R}$ by permutations of $p_1,p_2$ and $1-p_1-p_2$.
One can define an action of $\Sigma_3$ on $p_i$ and ${\mathbb F}$ so that these permutations preserve the ideal $I(X)$. 
Denote by $\sigma_1$ the transposition permuting $p_1$ and $p_2$. Define the action of $\sigma_1$ on ${\mathbb F}$ by the formula: 
\begin{equation}
\label{sf}
\sigma_1: y_1 \mapsto \frac{y_3}{y_2}, y_2 \mapsto \frac{1}{y_2}, y_3 \mapsto \frac{y_1}{y_2}.    
\end{equation}
One can check that $\sigma_1(X) = \frac{1}{y_2} X$.
$\sigma_2$ is a transposition permuting $p_1$ and $1-p_1-p_2$. Define the action of this permutation on ${\mathbb F}$ by the formula: 
\begin{equation}
\label{ss}
\sigma_2: y_1 \mapsto y_1, y_2 \mapsto 1-y_2, y_3 \mapsto y_1-y_3.
\end{equation}
We get that $\sigma_2(X) = -X$. One can check that the formulas (\ref{sf}) and (\ref{ss}) define the well-defined action of $\Sigma_3$ on ${\mathbb F}$. 

Consider an invariant field ${\mathbb F}_1 = {\mathbb F}^{\Sigma_3}$. Thus, we have the well-defined action of $\Sigma_3$ on algebra ${\cal S}$ by the automorphisms of ${\mathbb F}_1$-algebras. We have the following decomposition of the algebra ${\cal S}$ into a direct sum: ${\cal S} = p_1{\cal S} \oplus p_2{\cal S} \oplus (1-p_1-p_2){\cal S}$. Using the action of $\Sigma_3$, we get that the dimensions of subspaces $p_1{\cal S}$, $p_2{\cal S}$ and $(1-p_1-p_2){\cal S}$ over ${\mathbb F}_1$ are the same. Since these subspaces are vector spaces over ${\mathbb F}$, we get that ${\rm dim}_{{\mathbb F}_1}p_1{\cal S} = [{\mathbb F}:{\mathbb F}_1] \cdot m = 6m$, where $m = {\rm dim}_{\mathbb F}p_1{\cal S}$. Thus, we get that ${\rm dim}_{{\mathbb F}_1}{\cal S} = 18m$ and , hence, ${\rm dim}_{\mathbb F}{\cal S} = 3m$.
Thus, we have proved the following statement:
\begin{Proposition}
The dimension of ${\cal S}$ as a vector space over ${\mathbb F}$ is divisible by 3.
\end{Proposition}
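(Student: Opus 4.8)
The plan is to exploit the $\Sigma_3$-symmetry constructed above, together with the decomposition of ${\cal S}$ coming from the three orthogonal idempotents $p_1, p_2$ and $p_3 := 1 - p_1 - p_2$ of ${\cal A}$. The obstruction to reading off divisibility by $3$ directly is that $\Sigma_3$ does \emph{not} act ${\mathbb F}$-linearly on ${\cal S}$: the formulas (\ref{sf}) and (\ref{ss}) move the transcendentals $y_1, y_2, y_3$, so the three summands $p_i {\cal S}$ need not be equidimensional over ${\mathbb F}$. The idea is therefore to descend to the fixed field ${\mathbb F}_1 = {\mathbb F}^{\Sigma_3}$, over which the action \emph{is} linear, prove the three summands equidimensional there, and then transport the result back to ${\mathbb F}$ through the field extension ${\mathbb F}/{\mathbb F}_1$.

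Concretely, I would proceed as follows. First, record that since (\ref{sf}) and (\ref{ss}) send $X$ to a nonzero scalar multiple of itself, every $\sigma \in \Sigma_3$ preserves the ideal $I(X)$ and hence descends to a ring automorphism of ${\cal S} = {\cal R}/I(X)$ whose restriction to the central subfield ${\mathbb F}$ is the corresponding field automorphism and which fixes ${\mathbb F}_1$ pointwise; in particular each such $\sigma$ is ${\mathbb F}_1$-linear. Second, regard the idempotent decomposition ${\cal S} = p_1 {\cal S} \oplus p_2 {\cal S} \oplus p_3 {\cal S}$ as a decomposition of ${\mathbb F}_1$-vector spaces. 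Third, because $\sigma$ is a ring automorphism permuting the idempotents, $\sigma(p_i {\cal S}) = \sigma(p_i)\,{\cal S} = p_{\sigma(i)} {\cal S}$, so $\Sigma_3$ permutes the three summands transitively; being ${\mathbb F}_1$-linear, these maps are ${\mathbb F}_1$-isomorphisms, whence
\begin{equation}
{\rm dim}_{{\mathbb F}_1} p_1 {\cal S} = {\rm dim}_{{\mathbb F}_1} p_2 {\cal S} = {\rm dim}_{{\mathbb F}_1} p_3 {\cal S} =: n.
\end{equation}
Summing, ${\rm dim}_{{\mathbb F}_1} {\cal S} = 3n$. Finally, since ${\cal S}$ and $p_1 {\cal S}$ are ${\mathbb F}$-subspaces, the tower formula gives, with $c = [{\mathbb F}:{\mathbb F}_1]$, the equalities ${\rm dim}_{{\mathbb F}_1} {\cal S} = c \cdot {\rm dim}_{\mathbb F} {\cal S}$ and $n = c \cdot {\rm dim}_{\mathbb F} p_1 {\cal S}$; substituting into $3n = {\rm dim}_{{\mathbb F}_1}{\cal S}$ and cancelling $c$ yields ${\rm dim}_{\mathbb F} {\cal S} = 3\,{\rm dim}_{\mathbb F} p_1 {\cal S}$, which is divisible by $3$. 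Note that only the finiteness of $c$ is used here; by Artin's theorem $c = |\Sigma_3| = 6$ provided the $\Sigma_3$-action on ${\mathbb F}$ is faithful.

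The step I expect to be the main obstacle is not this linear-algebra bookkeeping but the verification underlying the first step: one must confirm that (\ref{sf}) and (\ref{ss}) genuinely define an action of the \emph{group} $\Sigma_3$ on ${\mathbb F}$ — that the defining relations of $\Sigma_3$ (the transpositions being involutions and satisfying the braid relation) survive these rational substitutions in the $y_i$ — and that the induced automorphisms really send $X$ to a scalar multiple of $X$, so that $I(X)$ is stable and the action descends to ${\cal S}$. This is exactly the computation flagged as ``one can check'' in the preceding paragraph, and it is where all the arithmetic in the $y_i$ resides; everything afterwards is formal. A secondary point, needed only for the sharp value $[{\mathbb F}:{\mathbb F}_1]=6$ rather than for the divisibility itself, is the faithfulness of the $\Sigma_3$-action on ${\mathbb F}$.
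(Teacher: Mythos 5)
Your proposal is correct and follows essentially the same route as the paper: use the $\Sigma_3$-action (which is only ${\mathbb F}_1$-linear, ${\mathbb F}_1={\mathbb F}^{\Sigma_3}$) to show the three summands $p_1{\cal S}$, $p_2{\cal S}$, $(1-p_1-p_2){\cal S}$ are equidimensional over ${\mathbb F}_1$, then divide out $[{\mathbb F}:{\mathbb F}_1]$ to get divisibility by $3$ over ${\mathbb F}$. Your only (harmless) refinement is observing that the exact value $[{\mathbb F}:{\mathbb F}_1]=6$ used by the paper is unnecessary --- finiteness of the extension suffices for the cancellation.
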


Using the proposition \ref{estdim}, we obtain that
\begin{Corollary}
\label{esc}
${\rm dim}_{\mathbb F}{\cal S} \le 18$.
\end{Corollary}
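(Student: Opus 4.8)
The plan is to combine the two facts already established immediately above into an elementary arithmetic conclusion. From Proposition \ref{estdim} we have the upper bound ${\rm dim}_{\mathbb F}{\cal S} \le 19$, obtained by showing that the filtration $F^{\bullet}{\cal S}$ stabilizes at level $4$ and that ${\rm dim}_{\mathbb F}F^4 I(X) \ge 42$. From the preceding Proposition we know that ${\rm dim}_{\mathbb F}{\cal S}$ is divisible by $3$, a consequence of the $\Sigma_3$-action realizing ${\cal S}$ as a direct sum $p_1{\cal S}\oplus p_2{\cal S}\oplus(1-p_1-p_2){\cal S}$ of three summands of equal ${\mathbb F}_1$-dimension. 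The corollary then follows by observing that the largest nonnegative multiple of $3$ not exceeding $19$ is $18$.

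First I would recall explicitly both inputs: the numerical estimate $d := {\rm dim}_{\mathbb F}{\cal S}\le 19$ and the congruence $3\mid d$. Next I would simply note that the integers $d$ satisfying $3\mid d$ and $d\le 19$ are exactly $d\in\{0,3,6,9,12,15,18\}$, so that $d\le 18$. There is no further algebraic content; the substance of the bound lives entirely in the two preceding results, and this corollary merely records their intersection.

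The only point requiring care is ensuring the two hypotheses are genuinely compatible, i.e. that the divisibility statement applies to the same finite-dimensional algebra ${\cal S}$ for which the bound $19$ was proved. Since both propositions are stated for the same ${\cal S}$ defined by the relation (\ref{relx}), with the filtration argument guaranteeing finite-dimensionality (so that $d$ is a well-defined nonnegative integer to which divisibility is meaningful), there is no obstacle: the finiteness from Proposition \ref{estdim} is precisely what makes the divisibility conclusion usable. Thus the proof amounts to a single sentence invoking both results.

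\begin{proof}
By Proposition \ref{estdim} we have ${\rm dim}_{\mathbb F}{\cal S}\le 19$, and by the preceding proposition ${\rm dim}_{\mathbb F}{\cal S}$ is divisible by $3$. The largest multiple of $3$ not exceeding $19$ is $18$, so ${\rm dim}_{\mathbb F}{\cal S}\le 18$.
\end{proof}
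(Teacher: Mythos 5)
Your proof is correct and is exactly the argument the paper intends: combine the bound ${\rm dim}_{\mathbb F}{\cal S}\le 19$ from Proposition \ref{estdim} with the divisibility of ${\rm dim}_{\mathbb F}{\cal S}$ by $3$ from the preceding proposition, and conclude that the dimension is at most $18$. No differences from the paper's route.
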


\begin{Corollary}
${\rm dim}_k S_x \le 18$ for general point $x \in {\mathbb P}^3$.
\end{Corollary}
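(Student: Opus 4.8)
The plan is to read $\cal S$ as the generic fiber of the family $\{S_x\}_{x \in {\mathbb P}^3}$ and to transport the bound of Corollary \ref{esc} from the generic point to a dense open set of closed points by upper semicontinuity of fiber dimension. Concretely, I would work on the affine chart $W = \{x_{11} \ne 0\} \cong {\mathbb A}^3 \subset {\mathbb P}^3$ with coordinates $y_1,y_2,y_3$ and coordinate ring $k[W] = k[y_1,y_2,y_3]$, whose fraction field is exactly ${\mathbb F}$. Over $W$ I form the $k[W]$-algebra ${\cal S}_W = (R \otimes_k k[W])/I$, where $I$ is the two-sided ideal generated by $X(y) = [p_1,q_1]+y_1[p_1,q_2]+y_2[p_2,q_1]+y_3[p_2,q_2]$. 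Since the formation of a quotient commutes with base change (right-exactness of $\otimes$), the fiber of ${\cal S}_W$ at a closed point $x \in W$ is $S_x$, while its fiber at the generic point is ${\cal S}$.

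The first step is to make ${\cal S}_W$ coherent of finite rank over a dense open subset. The filtration $F^{\bullet}R$ induces a filtration $F^{\bullet}{\cal S}_W$ by finitely generated $k[W]$-modules, each a quotient of the free module $F^{\bullet}R \otimes k[W]$ of finite rank. The reduction identities (\ref{pq}) and (\ref{qp}), together with the stabilization computation preceding Proposition \ref{estdim}, were derived over ${\mathbb F}$, and their structural coefficients $\alpha(i,j,k,l),\beta(i,j,k,l)$ are fixed rational functions of $y$. Clearing the finitely many denominators and discarding the proper closed locus where $\alpha(2,2,1,1)\beta(1,1,1,1)=1$ (a nonzero element of ${\mathbb F}$, hence a nonvanishing condition on a dense open set), I obtain a principal open subset $W_g = \{g \ne 0\} \subset W$ over which the same identities hold already in ${\cal S}_{W_g}$. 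They force $F^5{\cal S}_{W_g} = F^4{\cal S}_{W_g}$ and, by induction, $F^k{\cal S}_{W_g} = F^4{\cal S}_{W_g}$ for all $k$; thus ${\cal S}_{W_g} = F^4{\cal S}_{W_g}$ is a coherent ${\cal O}_{W_g}$-module generated by the images of the fixed finite set $F^4 R$.

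With coherence in hand the second step is immediate. The function $x \mapsto {\rm dim}_{k(x)}({\cal S}_{W_g}\otimes k(x))$ is upper semicontinuous on the integral scheme $W_g$, so $\{x : {\rm dim}_{k(x)}({\cal S}_{W_g}\otimes k(x)) \le r\}$ is open, where $r = {\rm dim}_{\mathbb F}{\cal S}$. This open set contains the generic point, hence is dense, and by Corollary \ref{esc} its bound is $r \le 18$. Identifying the closed fibers with $S_x$ as above (the residue field at a closed point is $k$, since $k$ is algebraically closed) yields ${\rm dim}_k S_x \le 18$ on a dense open subset of ${\mathbb P}^3$, which is the claim.

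I expect the main obstacle to be the bookkeeping of the first step: one must verify that every identity used to prove Proposition \ref{estdim} is genuinely the specialization of an identity valid over the ring $k[W]_g$, not merely over the field ${\mathbb F}$ after an uncontrolled localization, so that the filtration really stabilizes fiberwise on a dense open set and the comparison ${\cal S}_{W_g}\otimes k(x) \cong S_x$ is exact. The semicontinuity step itself is then formal. I would also stress that the divisibility-by-$3$ refinement behind Corollary \ref{esc} need not be re-established pointwise: it is used only to bound the generic rank $r$, after which semicontinuity transfers the sharpened bound $r \le 18$ to general closed points automatically.
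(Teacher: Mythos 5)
Your proposal is correct and matches the paper's (implicit) argument: the paper gives no proof of this corollary, treating it as an immediate specialization of Corollary \ref{esc} via upper semicontinuity of fiber dimension for the family $\{S_x\}$ over ${\mathbb P}^3$, which is exactly what you spell out. Your careful attention to spreading out the identities of Proposition \ref{estdim} over a principal open subset before invoking semicontinuity is the right way to make the paper's unstated step rigorous.
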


Note that if $x = (1:0:0:-1)$ then the algebra $S_x$ is infinite-dimensional (cf \cite{KZ1}).

\subsection{Representations of the algebra ${\cal S}$.}

In this subsection we study the representations of the algebra ${\cal S}$.

One can check that one-dimensional ${\cal S}$ - module is a one-dimensional module over ${\cal R} = {\cal A} * {\cal B}$. These modules are the modules over ${\cal R}^{ab} = k^{\oplus 3} \otimes k^{\oplus 3}$. Thus, there are nine one-dimensional ${\cal S}$-modules.
One can check that there are nine one-dimensional $S_x$ - modules for any $x \in {\mathbb P}^3$.

Further, consider a subspace $V \subset \overline{A} \oplus \overline{B}$ in the general position to $\overline{A}$ and $\overline{B}$. 
Using proposition \ref{comm}, we get that ${\rm mid}({\cal S} \otimes_{\mathbb F} \overline{\mathbb F}) \le 3$, where $\overline{\mathbb F}$ is an algebraic completion of ${\mathbb F}$.

We will construct a character $\chi$ of ${\cal R}(V)$ such that ${\cal R}$-module $M = {\cal R} \otimes_{{\cal R}(V)} \overline{\mathbb F}^{\chi}$ has a structure of ${\cal S}$ - module.

Denote by $t_1$ and $t_2$ the elements $p_1 - y_2q_1 - y_3q_2$ and $p_2 + q_1 + y_1q_2$.
It is easy that $p_1 = t_1 + y_2q_1 + y_3q_2$ and $p_2 = t_2 - q_1 - y_1q_2$. Recall that $p_i$ are the orthogonal idempotents. Thus, we get
the following relations between $t_1,t_2$ and $q_1,q_2$:
\begin{itemize}
\item{$(t_1 + y_2q_1 + y_3q_2)(t_1 + y_2q_1+y_2q_2) = t_1 + y_2q_1+y_3q_2$. This relation is equivalent to $p_1^2 = p_1$}
\item{$(t_2 - q_1 - y_1q_2)(t_2 - q_1-y_2q_2) = t_2 - q_1-y_2q_2$.  ($p^2_2 = p_2$)}
\item{$(t_1 + y_2q_1 + y_3q_2)(t_2 - q_1-y_1q_2) = 0$. ($p_1p_2 = 0$)}
\item{$(t_2 - q_1-y_1q_2)(t_1 + y_2q_1+y_3q_2) = 0$. ($p_2p_1 = 0$)}
\end{itemize}

These relation can be written as follows. For simplicity, denote by $d$ the element $y_3-y_1y_2 \in {\mathbb F}$. We get the following formulas:
$$
t_1q_1 = \frac{1}{d}(y_3t_1t_2+y_2y_3q_1t_2+y^2_3q_2t_2+y^2_2y_1q_1-y_2y_3q_1+y_1t^2_1+
$$
$$
+y_1y_2q_1t_1 +y_1y_3q_2t_1 - y_1t_1-y_1y_3q_2-y_1y_2q_1),
$$
$$
t_1q_2 = -\frac{1}{d}(t^2_1+y_2q_1t_1+y_3q_2t_1+y_2t_1t_2+y^2_2q_1t_2+y_2y_3q_2t_2-t_1-y_1y_2y_3q_2-y_3q_2-y_2q_1),
$$
$$
t_2q_1 = -\frac{1}{d}(-y_1y_3q_2+y_1y_2q_1-y_3t^2_2+y_3q_1t_2+y_3t_2-2y_3q_1-y_1t_2t_1+y^2_1q_2t_1+y_1y_3q_2t_2)
$$
and
$$
t_2q_2 = \frac{1}{d}(-t_2t_1+q_1t_1+y_1q_2t_1-y_2t^2_2+y_2q_1t_2+y_1y_2q_2t_2+y_2t_2-
$$
$$
-y_2q_1+y_1y_3q_2-y_1y_2q_2-y_1y_2q_2).
$$

Using the formulas for $t_i q_j, i,j = 1,2$ we get the formulas for ${\cal R}$-module ${\cal R} \otimes_{{\cal R}(V)} V$, where $V$ is a ${\cal R}(V)$-module.

Let $\chi$ be a character of ${\cal R}(V)$ given by the formula: $\chi: t_1 \mapsto z_1, t_2 \mapsto z_2$.
Denote by $v$ the generator of one-dimensional ${\cal R}(V)$-module $\overline{\mathbb F}^{\chi}$.
It is clear to see that $1 \otimes v, q_1 \otimes v, q_2 \otimes v$ are the generators of ${\cal R}$-module ${\cal R} \otimes_{{\cal R}(V)} {\mathbb K}^{\chi}$. Denote by $\rho$ the representation corresponding to ${\cal R}$ - module ${\cal R} \otimes_{{\cal R}(V)} \overline{\mathbb F}^{\chi}$
Using the formulas for $t_i q_j$, we get the following matrices:
$$
\rho(t_1) = 
\begin{pmatrix}
z_1 & \frac{1}{d}(y_3z_1z_2+y_1z^2_1-y_1z_1) & \frac{1}{d}(-z^2_1-y_2z_1z_2+z_1) \\
0 & \frac{1}{d}(y_2y_3z_2+y_1y^2_2-y_2y_3+y_1y_2z_1-y_2y_1) & \frac{1}{d}(-y_2z_1-y^2_2z_2+y_2) \\
0 & \frac{1}{d}(y_1y_3z_1+y^2_3z_2-y_3y_1) & \frac{1}{d}(-y_3z_1-y_2y_3z_2+y_1y_2y_3+y_3-y^2_3)
\end{pmatrix},
$$
$$
\rho(t_2) = 
\begin{pmatrix}
z_2 & \frac{1}{d}(y_1z_1z_2+y_3z^2_2-y_3z_2) & \frac{1}{d}(-z_1z_2-y_2z^2_2+y_2z_2)\\
0 & \frac{1}{d}(-y_1y_2-y_3z_2+2y_3-y_1z_1) & \frac{1}{d}(z_1+y_2z_2-y_2)\\
0 & \frac{1}{d}(-y_1y_3z_2+y_1y_3-y^2_1z_1) & \frac{1}{d}(y_1z_1+y_1y_2z_2+y_1y_3-y^2_1y_2-y_1y_2)
\end{pmatrix},
$$
$$
\rho(q_1) = 
\begin{pmatrix}
0 & 0 & 0\\
1 & 1 & 0\\
0 & 0 & 0
\end{pmatrix},
\rho(q_2) =
\begin{pmatrix}
0 & 0 & 0\\
0 & 0 & 0\\
1 & 0 & 1
\end{pmatrix}.
$$
One can check that the relations on $t_1,t_2$ and $q_1,q_2$ are satisfied.
The relation $t_1t_2 = t_2t_1$ is equivalent to the relations:
\begin{equation}
\label{cf}
c_1: = (y_1y_2+y_3)z_1z_2+y_2y_3z^2_2+y_1z^2_1-y_2y_3z_2-y_1z_1 = 0    
\end{equation}
\begin{equation}
\label{cs}
c_2: = z^2_1+y^2_2z^2_2+2y_2z_1z_2+(-y_1y_2-1+y_3)z_1 +
\end{equation}
$$
+ (-y_1y^2_2-y^2_2+y_2y_3)z_2 + y^2_2+y_1y_2-y_2-y_2y_3 = 0   
$$
and
\begin{equation}
\label{ct}
c_3: = y^2_1z^2_1+y^2_3z^2_2+2y_1y_3z_1z_2+(y^2_1y_2-y^2_1-y_1y_3)z_1+
\end{equation}
$$
+(y_1y_2y_3-2y^2_3)z_2+y_1y_3-y^2_1y_3-y_1y_2y_3+y_1y^2_3 = 0.
$$

These three equations define the intersection of three conics which we denote by big letters $C_1$, $C_2$ and $C_3$ in ${\mathbb F}^2$ respectively. Let us prove that these conics intersect in three points.
Consider ${\mathbb P}^2$, the natural compactification of ${\mathbb F}^2$. In this case, three conics generate three-dimensional space in the space of all conics ${\rm H}^0({\mathbb P}^2,{\cal O}(2)) = {\mathbb F}^6$. All three-dimensional subspaces in the space are parameterized by the 9-dimensional variety ${\rm Gr}(3,6)$.

{\it A determinantal curve} of three-dimensional subspaces of conics is the projective curve defined by the equation: $\{(a_1:a_2:a_3) \in {\mathbb P}^2_{(a_1:a_2:a_3)}| \quad {\rm det}(a_1c_1+a_2c_2+a_3c_3) = 0\}$, where ${c_1,c_2,c_3}$ is a basis of this subspace. Geometric properties of the subspace of conics are determined by the geometric properties of the determinant curve. One can show that three-dimensional space of conics has three common points if and only if the determinant curve is a union of three lines.

Using computer calculation on Maple, 
one can check that the determinant curve of the space generated by $c_1,c_2,c_3$ given by the formulas (\ref{cf}),(\ref{cs}) and (\ref{ct}) is a union of lines. Thus, these conics pass through three common points. 
Using these calculation, we get the cubic polynomial $f$ over ${\mathbb F}$ such that three points are defined over ${\mathbb K} = {\mathbb F}(f)$. 
Thus, we have proved the following proposition: 

\begin{Proposition}
There is an extension ${\mathbb K} \supset {\mathbb F}$ of degree 3 such that 
$|C_1 \cap C_2 \cap C_3| = 3$ in the two-dimensional affine space ${\mathbb K}^2$ with coordinates $z_1,z_2$. 
\end{Proposition}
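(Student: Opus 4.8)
The plan is to read off both the number of common points and their field of definition from the discriminant cubic of the net spanned by $C_1,C_2,C_3$, exactly as the determinantal-curve machinery above prescribes. First I would homogenize the three equations (\ref{cf}), (\ref{cs}), (\ref{ct}) by a variable $z_0$ and record their symmetric $3\times 3$ Gram matrices $M_1,M_2,M_3$ over $\mathbb F$. Since their entries are explicit in the $y_i$, one checks that $M_1,M_2,M_3$ are $\mathbb F$-linearly independent, so the assignment $(a_1:a_2:a_3)\mapsto a_1M_1+a_2M_2+a_3M_3$ genuinely parameterizes the net. A member of the net is a singular conic precisely when the cubic form $\Delta(a_1,a_2,a_3)=\det(a_1M_1+a_2M_2+a_3M_3)$ vanishes, so the whole question is encoded in the plane cubic $\{\Delta=0\}\subset\mathbb P^2_{(a)}$.

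Next I would run the Maple factorization of $\Delta$ and verify that over $\overline{\mathbb F}$ the curve $\{\Delta=0\}$ is a union of three \emph{distinct}, pairwise non-concurrent lines, i.e. a genuine triangle. Invoking the equivalence recalled above (determinant curve a union of three lines $\Longleftrightarrow$ the net has three common points), this immediately yields $|C_1\cap C_2\cap C_3|=3$ over $\overline{\mathbb F}$. For extra rigor, and to feed the field computation, I would also extract the points geometrically: the three vertices of the discriminant triangle are the singular points of $\{\Delta=0\}$, each indexing a rank-$2$ member of the net (a pair of lines), and the corresponding base point is the kernel direction of $\sum_j (V_i)_j M_j$. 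Computing these kernels exhibits exactly three points and shows that none lies at infinity ($z_0\neq 0$), so all three sit in the affine $(z_1,z_2)$-plane as the statement requires.

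To pin down the field, note that the three conics are defined over $\mathbb F$, so the set $\{P_1,P_2,P_3\}$ is $\mathrm{Gal}(\overline{\mathbb F}/\mathbb F)$-stable; equivalently the three discriminant lines, and hence their three vertices, form a single Galois orbit. I would obtain the governing cubic by eliminating $z_2$ from the equations (taking the resultant of two of the $c_i$ with respect to $z_2$ and discarding the extraneous linear factor coming from the fourth intersection point of those two conics), arriving at a cubic $f\in\mathbb F[z_1]$ whose roots are the $z_1$-coordinates of $P_1,P_2,P_3$. I would then check on Maple that $f$ is irreducible over $\mathbb F$ and that its discriminant is a square in $\mathbb F$. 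Irreducibility gives $[\mathbb F(f):\mathbb F]=3$, while the square-discriminant condition forces the Galois group of $f$ to be cyclic, so $\mathbb K=\mathbb F(f)$ is already the full splitting field and contains all three roots; as the second coordinate $z_2$ of each point is determined over $\mathbb K$ by $z_1$ through the equations, all three points lie in $\mathbb K^2$. This establishes the proposition with $[\mathbb K:\mathbb F]=3$.

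The main obstacle is twofold, and both parts are heavy symbolic computations over $\mathbb F=k(y_1,y_2,y_3)$. First, I must confirm that $\Delta$ degenerates to a genuine triangle rather than to a worse configuration (three concurrent lines, a line plus an irreducible conic, or a line plus a double line); only the triangle case guarantees exactly three reduced base points in general position, so checking distinctness and non-concurrency of the three lines explicitly is essential. Second, the field claim is not automatic from irreducibility of $f$ alone: simultaneously defining all three points over a single cubic $\mathbb K$ needs the cyclic (square-discriminant) condition, since an irreducible cubic with group $\Sigma_3$ would place only one of the three points in $\mathbb F(f)$. Once these two Maple verifications go through, the geometric argument closes.
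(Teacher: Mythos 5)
Your proposal follows essentially the same route as the paper: form the net spanned by $C_1,C_2,C_3$, compute its determinantal cubic by Maple, verify it is a union of three lines to conclude there are exactly three base points, and extract a cubic $f$ over $\mathbb F$ defining the field $\mathbb K$. In fact you are more careful than the paper's own one-paragraph argument, which silently passes over the three checks you rightly flag as essential: that the discriminant cubic is a genuine triangle rather than a degenerate configuration, that none of the three base points lies at infinity (the statement is about the \emph{affine} plane $\mathbb K^2$), and --- most importantly --- that irreducibility of $f$ alone does not suffice, since only a square discriminant (cyclic Galois group) guarantees that a single degree-$3$ extension contains all three points; one small slip is your phrase ``equivalently \dots a single Galois orbit'', since Galois-stability of the set of base points does not by itself imply a single orbit --- that is exactly what the irreducibility check must establish.
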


Thus, we get that there are three points $(z^{(i)}_1,z^{(i)}_2), i = 1,2,3$ in $C_1 \cap C_2 \cap C_3 \subset {\mathbb K}^2$. Denote by ${\mathbb K}(i), i = 1,2,3$ the one-dimensional ${\cal R}(V)$ - modules corresponding to the points $(z^{(i)}_1,z^{(i)}_2) \in {\mathbb K}^2, i = 1,2,3$ respectively.
Consider ${\cal R}^{\mathbb K} = {\cal R} \otimes_{\mathbb F} {\mathbb K}$ - module $M = {\cal R} \otimes_{{\cal R}(V)} {\mathbb K}(1)$. 
Denote by $\rho$ the representation $\rho$ of algebra ${\cal R}^{\mathbb K}$ corresponding to module $M$.
Since $(z^{(i)}_1,z^{(i)}_2) \in C_1 \cap C_2 \cap C_3$, we get that $\rho(t_1t_2 - t_2t_1) = 0$. Thus, we have a structure of three-dimensional ${\cal S}^{\mathbb K} = {\cal S} \otimes_{\mathbb F} {\mathbb K}$-module on $M$.

\begin{Proposition}

${\cal S}^{\mathbb K}$ - module $M$ is irreducible.
\end{Proposition}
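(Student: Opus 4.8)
The plan is to analyze $M$ through the explicit matrices $\rho(t_1),\rho(t_2),\rho(q_1),\rho(q_2)$ and to show that no proper nonzero $\mathbb{K}$-subspace is simultaneously invariant under all four. The first observation is that the restriction of $M$ to $\mathcal{B}$ is especially rigid. Since $\rho(q_1),\rho(q_2)$ and $\rho(q_3)=1-\rho(q_1)-\rho(q_2)$ are three rank-one idempotents summing to the identity, their images are three distinct lines spanning $\mathbb{K}^3$, and $M|_{\mathcal{B}}$ is the multiplicity-free sum of the three distinct characters of $\mathcal{B}=k^{\oplus 3}$. Consequently every $\mathcal{B}$-submodule of $M$ is a sum of some of these eigenlines, so the only proper nonzero ones are the three lines $\mathrm{im}\,\rho(q_i)$ and the three planes $\ker\rho(q_i)$. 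Because a $\mathcal{S}^{\mathbb{K}}$-submodule is in particular a $\mathcal{B}$-submodule, a putative proper nonzero submodule $W\subseteq M$ must be one of these six subspaces.

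The second step reduces irreducibility to a finite invariance check. As $t_1,t_2,q_1,q_2$ generate $\mathcal{S}^{\mathbb{K}}$ (recall $p_1=t_1+y_2q_1+y_3q_2$ and $p_2=t_2-q_1-y_1q_2$), and each of the six candidate subspaces is already invariant under $\rho(q_1),\rho(q_2)$ by construction, it suffices to verify that none of them is invariant under $\rho(t_1)$ (falling back on $\rho(t_2)$ where convenient). For a candidate line $\mathbb{K}v$ this is the condition that $\rho(t_1)v$ is not proportional to $v$, the nonvanishing of a $2\times 2$ minor; for a candidate plane $\ker\rho(q_i)$ it is the condition that the covector cutting out the plane is not a left eigenvector of $\rho(t_1)$, again the nonvanishing of an explicit determinant. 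Each of the six conditions is thus the nonvanishing of a concrete element of $\mathbb{K}$ built from the entries of $\rho(t_1)$ (and $\rho(t_2)$).

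I expect the main obstacle to be that these six quantities cannot be treated as polynomials in free variables $z_1,z_2$: the pair $(z_1,z_2)$ is constrained to lie on $C_1\cap C_2\cap C_3$ and is therefore algebraic of degree three over $\mathbb{F}$, so the non-invariance must be established \emph{on the curve} rather than for an unconstrained point. To handle this I would specialize $(y_1,y_2,y_3)$ to generic numerical values, solve $c_1=c_2=c_3=0$ for the three points $(z_1,z_2)$ furnished by the previous proposition, and check on Maple that at one such point every one of the six invariance determinants is nonzero; nonvanishing at a single generic specialization already forces nonvanishing as an element of $\mathbb{K}$. Since the three solution points form a single $\mathrm{Gal}(\mathbb{K}/\mathbb{F})$-orbit, the same conclusion holds for the module attached to each of them, so the three conics being in the prescribed position is exactly what guarantees that all six candidate subspaces fail to be $t_1$- (or $t_2$-) invariant. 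This rules out every proper nonzero $W$ and shows $W=M$, i.e. $M$ is irreducible.
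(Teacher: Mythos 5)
Your argument is correct, and it is in fact more detailed than the paper's, which dismisses the proposition with the single word ``Straightforward.'' Your key reduction is the right one: since $\rho(q_1)$, $\rho(q_2)$ and $1-\rho(q_1)-\rho(q_2)$ are three orthogonal rank-one idempotents, the restriction $M|_{\cal B}$ is multiplicity-free, so any ${\cal S}^{\mathbb K}$-submodule must be a sum of the three eigenlines, leaving only six candidates to eliminate by a finite check against $\rho(t_1)$, $\rho(t_2)$. You are also right to flag the one genuine subtlety, which the paper glosses over entirely: the relevant minors are elements of $\mathbb K$, not polynomials in free variables $z_1,z_2$, so their nonvanishing must be certified on $C_1\cap C_2\cap C_3$ (e.g.\ by reducing modulo the ideal $(c_1,c_2,c_3)$ or by a generic numerical specialization of $(y_1,y_2,y_3)$, as you propose); note for instance that the plane $\ker\rho(q_3)=\{x_1=0\}$ is a candidate submodule and the entry $\frac{1}{d}(y_3z_1z_2+y_1z_1^2-y_1z_1)$ obstructing its invariance simplifies nontrivially modulo $c_1$, so this check is not vacuous. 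Two small cautions: your proof is complete only once that computation is actually carried out; and the three intersection points are conjugate under $\mathrm{Gal}(\overline{\mathbb F}/\mathbb F)$ rather than under $\mathrm{Gal}(\mathbb K/\mathbb F)$ (the cubic $f$ need not be Galois over $\mathbb F$), though conjugacy over $\mathbb F$ already suffices for the ``one point nonzero implies all three nonzero'' step, and in any case only the point defining $M$ is needed for the statement as given.
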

\begin{proof}
Straightforward.
\end{proof}

Using corollary \ref{esc}, we get the following
\begin{Theorem}
We have the following isomorphism of algebras:
\begin{equation}
{\cal S}^{\mathbb K} \cong {\mathbb K}^{\oplus 9} \oplus M_3({\mathbb K}).
\end{equation}
Thus, algebra $S_x$ for generic $x \in {\mathbb P}^3$ is isomorphic to $k^{\oplus 9} \oplus M_3(k)$.
\end{Theorem}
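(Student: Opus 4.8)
The plan is to identify enough simple modules of $\mathcal{S}^{\mathbb K}$ to build a surjection onto ${\mathbb K}^{\oplus 9}\oplus M_3({\mathbb K})$ and then to collapse the resulting inequality against the upper bound of Corollary \ref{esc}. First I would collect the simple modules already produced: the discussion preceding the theorem gives nine pairwise distinct one-dimensional modules, arising from the nine characters of $\mathcal{R}^{ab}=k^{\oplus 3}\otimes k^{\oplus 3}$, together with the three-dimensional module $M=\mathcal{R}\otimes_{\mathcal{R}(V)}{\mathbb K}(1)$, which the previous proposition asserts is irreducible. These ten modules are pairwise non-isomorphic: the three-dimensional one is separated from the one-dimensional ones by dimension, and the nine characters are distinct by construction.

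Next I would verify that $M$ is absolutely irreducible, i.e.\ that ${\rm End}_{\mathcal{S}^{\mathbb K}}(M)={\mathbb K}$. Given the explicit matrices $\rho(t_1),\rho(t_2),\rho(q_1),\rho(q_2)$, this is the Burnside-type statement that these four matrices generate the full matrix algebra $M_3({\mathbb K})$, which is a direct finite computation. Here it is worth recording that the three Galois-conjugate solutions of $C_1\cap C_2\cap C_3$ are precisely the three joint eigenvalues of the commuting pair $(\rho(t_1),\rho(t_2))$ on the single module $M$, so that they yield one isomorphism class rather than three; this is the point that keeps the dimension count from overshooting. With absolute irreducibility in hand, the Jacobson density theorem applied to the ten pairwise non-isomorphic simple modules, all with endomorphism ring ${\mathbb K}$, shows that the evaluation homomorphism
\[
\Phi:\mathcal{S}^{\mathbb K}\longrightarrow {\mathbb K}^{\oplus 9}\oplus M_3({\mathbb K})
\]
is surjective.

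The target has dimension $9+9=18$, whereas $\dim_{\mathbb K}\mathcal{S}^{\mathbb K}=\dim_{\mathbb F}\mathcal{S}\le 18$ by Corollary \ref{esc}; hence $\Phi$ is forced to be an isomorphism and $\mathcal{S}^{\mathbb K}\cong {\mathbb K}^{\oplus 9}\oplus M_3({\mathbb K})$. Finally I would descend to a generic closed point. The isomorphism $\Phi$ is encoded by finitely many structure constants defined over a finite extension of ${\mathbb F}=k(y_1,y_2,y_3)$, so it spreads out to an isomorphism of algebra families over a dense open $U\subseteq{\mathbb P}^3$; since $k$ is algebraically closed the degree-three twist becomes trivial at closed points of $U$, and specializing yields $S_x\cong k^{\oplus 9}\oplus M_3(k)$ for generic $x$.

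I expect the main obstacle to be the absolute irreducibility of $M$ together with the correct accounting of the three conjugate solution points: one must confirm that they produce a single three-dimensional simple module, realized as the joint spectrum of the commuting operators $t_1,t_2$, rather than three distinct simples, since three distinct three-dimensional simples would contribute $27$ to the dimension and violate the bound of $18$. The spreading-out and descent step is routine but still requires genericity, in order to guarantee that the specialized characters stay distinct and that the specialized three-dimensional module remains simple.
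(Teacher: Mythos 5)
Your proposal follows essentially the same route as the paper: exhibit the nine distinct one-dimensional simples and the three-dimensional simple $M$, map onto ${\mathbb K}^{\oplus 9}\oplus M_3({\mathbb K})$, and pinch against the bound $\dim_{\mathbb F}{\cal S}\le 18$ of Corollary \ref{esc} to force an isomorphism. The paper's own proof is a two-line version of exactly this count ("nine one-dimensional and one three-dimensional irreducible representation, hence $\dim = 18$"), so your additional care about absolute irreducibility of $M$, the single isomorphism class coming from the three Galois-conjugate points, and the spreading-out to closed points $x$ only fills in details the paper leaves as "the rest is easy."
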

\begin{proof}
There are nine one-dimensional irreducible representations and one three-dimensional irreducible representation of ${\cal S}^{\mathbb K}$. Thus, ${\rm dim}_{\mathbb K} {\cal S}^{\mathbb K} = 18$ and ${\cal S}^{\mathbb K} \cong {\mathbb K}^{\oplus 9} \oplus M_3({\mathbb K})$. The rest is easy.
\end{proof}

\end{document}